\numberwithin{equation}{section}
\theoremstyle{plain} 
\newtheorem{theo}{Theorem}[section]
\newtheorem{prop}[theo]{Proposition}
\newtheorem{lemma}[theo]{Lemma}
\theoremstyle{definition}
\newtheorem{rem}[theo]{Remark}
\newtheorem{exa}[theo]{Example}
\newtheorem{definition}[theo]{Definition}
\DeclareMathAlphabet{\mathpzc}{OT1}{pzc}{m}{it} 
\newcommand{\be}{\end{eqnarray*}}
\newcommand{\ee}{\end{eqnarray*}}
\newcommand{\ben}{\begin{eqnarray}}
\newcommand{\een}{\end{eqnarray}}
\def \R{\mathbb R}
\def \N{\mathbb N}
\def \B{\mathbb B}
\def \eps{\varepsilon}
\def \U{\mathcal U}
\def \C{\mathcal C}
\def \RR{\mathcal R}
\def \A{\mathcal A}
\def \M{\mathcal M}
\def \NN{\mathcal N}
\def \T{\mathcal T}
\def \E{\mathcal E}
\def\W{\mathcal{W}}
\def\K{\mathcal{K}}
\def\S{\mathcal{S}}
\def\J{\mathcal{J}}
\def\h{\mathfrak h}
\def\g{\mathfrak g}
\def \Ll{\mathscr L}
\def \BB{\mathcal B}
\def \d{\textbf d}
\def\eps{\varepsilon}
\def \vsm{\vskip 0.2 truecm}
\def \vsmm{\vskip 0.1 truecm}
\def \ds{\displaystyle}
\def\bel{\begin{equation}\label}
\def\eeq{\end{equation}}
\def \w{\omega}
\def \weak{\rightharpoonup^*}
\begin{document}
\title[Impulsive optimal control problems with time delays]{Impulsive optimal control problems with time delays in the drift term}
 \thanks{This research is partially supported by the  INdAM-GNAMPA Project 2023, CUP E53C22001930001}
  \thanks{{\em Authors}. G. Fusco, Dipartimento di Matematica Tullio Levi-Civita,
Universit\`a di Padova, Via Trieste 63, Padova  35121, Italy. 
Email:\,
fusco@math.unipd.it \\
M. Motta, Dipartimento di Matematica Tullio Levi-Civita,
Universit\`a di Padova, Via Trieste 63, Padova  35121, Italy. 
Email:\,
motta@math.unipd.it}
 \thanks{$^*$ Corresponding author: Giovanni Fusco}
\author{Giovanni Fusco}
\author{Monica Motta}
%
 \date{\today}
\begin{abstract} 
We  introduce a  notion of bounded variation solution for a new class of  nonlinear control systems with ordinary and impulsive controls, in which the drift function depends not only on the state, but also on its past history, through a finite number of time delays. After proving the well-posedness of such solutions and the continuity of the corresponding input-output map with respect to suitable topologies, we establish  
 necessary  optimality conditions for an associated optimal control problem. The approach, which involves approximating the problem by a non-impulsive optimal control problem with time delays and using  Ekeland’s principle combined with a recent, nonsmooth version of the Maximum Principle for  conventional delayed systems,  allows us to deal with mild regularity assumptions and a general endpoint constraint.   
 \end{abstract}
\subjclass[2020]{49N25, 34K35, 93C43, 49K21}   
\keywords{Optimal control, impulse control, maximum principle, time delay systems, nonsmooth analysis}

\maketitle
\section{Introduction}
We  establish necessary optimality conditions, in the form of a nonsmooth Maximum Principle, for the following impulsive optimal control problem with time delays, labeled (P): 
\[
\text{Minimize} \,\,\,   \Phi(x(0),x(T)) + \int_0^T l_0\left(t,\{x(t-h_k)\}_{k=0}^N,\alpha(t)\right) dt + \int_{[0,T]} l_1(t, \alpha(t)) \mu(dt)
\]
over the set of control pairs $(\mu,\alpha)$ with  $\mu \in C^*(\K)$ and $\alpha \in  \A_\mu$, and bounded variation trajectories $x:[-h,T]\to\R^n$, satisfying the  control system  
\bel{ics1}
\begin{cases}
x(t)=x(0)+\int_0^t f\left(s,\{x(s-h_k)\}_{k=0}^N,\alpha(s)\right) ds + \int_{[0,t]}G(s, \alpha(s)) \mu(ds) \quad \text{$\forall t\in]0,T]$,}  \\
x(t)=\zeta(t), \quad \text{a.e. $t\in[-h,0[$,} 
\end{cases}
\eeq
and satisfying the endpoint constraint
\bel{endcon}
(x(0),x(T))\in\T.
\eeq
Problem (P) involves both measurable functions and vector-valued measures as controls, since, fixed a time horizon $T>0$,  $C^*(\K)$ denotes the set of   regular measures $\mu$ on the Borel subsets of $[0,T]$ with range belonging to a closed convex cone $\K\subset \R^m$,  and  $\A_\mu:= \{\alpha:[0,T]\to\R^q  \text{ measurable s.t. } \alpha(t)\in A(t)  \text{ a.e. and $\mu$-a.e.} \}$, where $A:[0,T]\rightsquigarrow \R^q$ is a set-valued map. 
 Furthermore, the data comprise  real numbers  $0=h_0< h_1<\dots<h_N=:h$,   functions $\Phi:\R^{2n}\to\R$, $l_0:[0,T]\times(\R^n)^{N+1}\times \R^q \to\R$, $l_1:[0,T]\times \R^q\to\R^m$, $f:[0,T]\times(\R^n)^{N+1}\times \R^q \to\R^n$, $G:[0,T]\times \R^q\to\R^{n\times m}$, $\zeta:[-h,0]\to\R^n$ (which describes the ``history" of the state trajectories before time $0$),  and a closed subset $\T\subset\R^{2n}$ (the target). Precise assumptions and definitions will be given in Sec. \ref{S1}. 
 
This problem can be interpreted as an extension of the following non-impulsive optimal control problem with time delays, labeled ${\rm (P')}$, in which  dynamics and cost  depend linearly  on an unbounded control $\w$, in addition   to an ordinary control $\alpha$:
\[
\text{Minimize} \,\,\,   \Phi(x(0),x(T)) + \int_0^T \left[l_0\left(t,\{x(t-h_k)\}_{k=0}^N,\alpha(t)\right) + l_1(t, \alpha(t)) \w(t)\right]dt
\]
over the set of controls  $(w,\alpha)\in  L^1([0,T],\R^m\times\R^q)$ such that $\w(t)\in\K$ and $\alpha(t)\in A(t)$ for a.e. $t\in[0,T]$, and absolutely continuous trajectories $x:[-h,T]\to\R^n$, satisfying the conventional control system with time delays 
\bel{ics2}
\begin{cases}
\dot x(t)=f\left(t,\{x(t-h_k)\}_{k=0}^N,\alpha(t)\right)+G(t, \alpha(t))\w(t), \quad t\in]0,T], \\
x(t)=\zeta(t) \quad \text{a.e. $t\in [-h,0[$,}
\end{cases}
\eeq
and the endpoint constraint \eqref{endcon}.
Indeed, given a non-impulsive control pair $(\w,\alpha)$ and a corresponding solution $x$  to \eqref{ics2},  
$(\w,\alpha)$  can be identified with the impulsive control $(\mu,\alpha)=(\w\,dt,\alpha)$ and   $x$ clearly satisfies \eqref{ics1} for such $(\mu,\alpha)$. 
\vsm
As first results, we prove that this dynamics' extension is proper and the notion of impulsive trajectory is robust.  Specifically,   under mild hypotheses   we establish: 
\vsm
{\em Well-posedness:} given an initial condition $\xi\in\R^n$ and a control pair  $(\mu,\alpha)$,  there is one and only one corresponding  bounded variation trajectory $x$ to \eqref{ics1} satisfying $x(0)=\xi$;
\vsm
{\em Density:}    for any $(\mu,\alpha)$ and $x$ as above,   there exists a sequence of non-impulsive controls $(\w_i,\alpha_i)$ and corresponding trajectories $x_i$ to \eqref{ics2},  such that $dx_i\weak d\bar x$ and  $x_i(t)\to \bar x(t)$ in a full measure subset of $[0,T]$ containing 0 and $T$. 
\vsm
{\em Continuity of the input-output map:} the map
$(\xi,\mu,\alpha)\mapsto  x$,
where $x$  is the solution to \eqref{ics1} associated with $(\mu,\alpha)$ and such that $ x(0)=\xi$, is  continuous in a suitable sense (see Thm. \ref{input_output}). 
\vsm
Afterwards, the main result of the paper is expressed as a  {\em Maximum Principle} for the impulsive optimal control problem with time delays,  (P). 
\vsm

We emphasize that these results are obtained without invoking a well-known reparameterization technique,   usually employed in impulsive control without time delays since early work \cite{Ris:65,War:65}. In particular, this technique  
leads  to the introduction of an auxiliary free end-time optimal control problem  with ordinary controls only, in which time is considered as a state variable (see also \cite{BR:88,Mi:94,MR:95}).   In fact,   this procedure   seems hardly extendable to the case with time delays,  since the auxiliary problem that one obtains is not standard, as it involves time delays depending on the control itself.
Furthermore, this method
would require  Lipschitz continuity of the data in their $t$-dependence and a fixed control set $A$ in place of the set-valued map $A(t)$, as $t$ plays the role of a state. We follow instead a different approach proposed in \cite{VP}, which involves approximating our impulsive optimal control problem with time delays by a conventional one,  without any time change. We point out that, as in the case without delays, this approach is applicable  because the Lagrangian cost function $l_1$ and the {\em fast dynamics} $G$ do not depend on the state variable. 

 \vsm
The Maximum Principle established in this paper extends, on the one hand, the impulsive  Maximum Principle obtained in \cite{VP} to time delayed systems, and, on the other hand, the Maximum Principle for time delayed  problems of \cite{VB} to the impulsive setting considered here. Furthermore, it  might also be useful for the applications' relevant problem of parameter identification, whenever ordinary controls in the system are treated as unknown parameters. Our primary goal here is  to provide the weakest hypotheses our methods permit under which necessary conditions can be formulated and proved for problem (P).  In particular, we do not address  the infimum gap phenomenon, that is, we do not look for conditions guaranteeing that  the infimum of the extended problem (P) coincides with that of ${\rm (P')}$.  Results of this kind for very general impulsive problems without delays can be found e.g. in \cite{AMR15,MRV,FM1,FM2,FM3,FM4,PR,MPR} and references therein.  

It is worth mentioning that in the literature there are  several results on the stabilizability of delayed impulsive control systems and on the optimization of some specific related problems, but they all concern the so-called   `impulse model', where impulsive controls essentially reduce to a finite or countable number of jump instants, with a preassigned jump-function. Therefore, the line of research starting with this paper, based on a different notion of impulsive control system, could have interesting implications for applications (e.g. to fed-batch fermentation \cite{XSSZ02,GLFX06} or to impulsive control of delayed neural networks \cite{LCH20}), as it would allow the development of  new nonlinear models, impulsive and with time delays, with a significant freer allocation of the impulses.

\vsm
The paper is organized as follows.   In  Sec. \ref{S1}  we introduce in a rigorous way the concepts of  control  and  trajectory for the delayed impulsive control system considered in problem (P) and prove some  fundamental properties of the set of the impulsive trajectories. In Sec. \ref{S2} we establish the Maximum Principle, whose proof in given in Sec. \ref{MP_proof}. An Appendix with some technical proofs concludes the paper.

\subsection{Notations and preliminaries}\label{sub1.1}
We write $\BB$ and $\Ll$ for the sets of the Borel subsets and the Lebesgue subsets of $[0,T]$, respectively, and we denote the set of Borel subsets of $\R^l$ by $\BB^l$. Given an interval $I\subseteq\R$ and a set $\Omega \subseteq \R^l$, we write $\M(I,\Omega)$, $C(I,\Omega)$, $W^{1,1}(I,\Omega)$, $BV(I,\Omega)$  for the space of measurable, continuous, absolutely continuous and bounded variation functions on $I$ and with values in $\Omega$,  respectively. We will use $\| \cdot \|_{L^{\infty}(I)}$ and $\| \cdot \|_{L^1(I)}$ to denote the ess-sup norm on $I$ and the $L^1$-norm on $I$, respectively. When the domain is clear, we will sometimes  simply write $\| \cdot \|_{L^{\infty}}$ and $\| \cdot \|_{L^1}$. 
We denote by $C^*(\Omega)$ the set of signed and regular measures $\mu:\BB\to \Omega$ (from now on we will refer to such $\mu$ simply as measures), and we set $C^\oplus:= C^*(\R_{\geq0})$, where $\R_{\geq 0} := [0,+\infty[$.  For all these classes of functions,  we will not specify domain and  codomain when the meaning is clear. Given $\mu\in C^* (\R^l)$, {\em $\mu$-a.e.} means ``almost everywhere w.r.t. $\mu$", and when we do not specify $\mu$ we implicitly refer to the Lebesgue measure.
It is known that there is a bijection between $C^*(\R^l)$ and the set of the equivalence classes of functions of bounded variation from $[0,T]$ to $\R^l$ which are right continuous on $]0,T[$ and differ for a constant. In particular, given $\mu\in C^*(\R^l)$, we can associate with it the discontinuous trajectory $z$ such that $dz(t) = \mu(dt)$, given by 
\bel{def_integrale}
z(t) =   \int_{[0,t]} dz(s) =   \int_{[0,t]} \mu(ds) \qquad \forall t\in]0,T].
\eeq
Given $\mu\in C^*(\R^l)$, the \textit{total variation measure} is the measure $|\mu|\in C^\oplus$ given by
$$
|\mu| = \sum_{j=1}^l (\mu^j_+ + \mu^j_-),
$$
where $\mu^j_+, \mu^j_-\in C^\oplus$ are the elements of the Jordan decomposition of the $j$-th component $\mu^j$ of $\mu$. 
In $C^*(\R^l)$, the norm is the sum of the total variations of the component measures, i.e. $\|\mu\|_{C^*(\R^l)} := \sum_{j=1}^l |\mu^j|([0,T])$. Note that the components of $\mu\in C^*(\R^l)$ are absolutely continuous with respect to the total variation measure $|\mu|$ (we write $\mu^j \ll |\mu|$ for any $j$), hence there exists a function 
$\w:[0,T]\to\R^l$, which we will refer to as the \textit{Radon-Nikodym derivative of $\mu$ with respect to $|\mu|$}, such that $\mu(dt)=\w(t) |\mu|(dt)$. Equivalently, we will write $\w=\frac{d\mu}{d|\mu|}$. A vector-valued function $\Psi:[0,T]\to\R^l$ is said to be $\mu$-integrable if $t\mapsto |\Psi(t)\cdot \w(t)|$ is integrable with respect to $|\mu|$, and in this case we set \footnote{This definition is more general than $\int_B \Psi(t) \cdot \mu(dt) = \sum_j \int_B \Psi_j(t) \mu^j(dt)$, since the latter requires that $\Psi_j$ is $\mu^j$-integrable for any $j$.}
\bel{def_integrals}
\int_B \Psi(t) \cdot \mu(dt) = \int_B\Psi(t)\cdot\w(t) |\mu|(dt), \qquad \forall B\in\BB. 
\eeq
Given a sequence $(\mu_i)_i\subset C^*(\R^l)$ and $\mu\in C^*(\R^l)$, we write $\mu_i\weak \mu$ if  
$$
\lim_i \int_{[0,T]} \varphi(t) \mu^j_i(dt) = \int_{[0,T]} \varphi(t) \mu^j(dt), \qquad \forall \varphi\in C([0,T], \R), \ \forall j=1,\dots,l.
$$ 

We denote by $\ell(\Omega)$, ${\rm co}(\Omega)$, $\overline{\Omega}$, and $\partial \Omega$ the Lebesgue measure, the convex hull,  the closure, and the boundary of $\Omega$, respectively. 
As is customary,  $\chi_{_\Omega}$ is the characteristic function of $\Omega$, namely $\chi_{_\Omega}(x)=1$ if $x\in \Omega$ and $\chi_{_\Omega}(x)=0$ if $x\in\R^l\setminus \Omega$.  
For any  $a,b \in \R$, we write $a \wedge b:= \min \{a,b\}$. Given $r>0$, we denote the closed ball of radius $r$ in $\R^l$ by $r\B_l$, omitting the dimension when it is clear from the context. 
Given a closed set $C \subseteq \R^l$  and a point $z \in \R^l$, we define the distance of $z$ from $C$ as $d_C(z) := \min_{y \in C} |z-y|$, and we define the {\em support function} $\sigma_C$ of the set $C$ as $\sigma_C (y) :=\sup \{ z\cdot y \text{ : } z\in C \}$ for any $y\in\R^l$. 

\vsm 
Some standard constructs from nonsmooth analysis are employed in this paper. For background material we refer the reader for instance to \cite{Cl,CLSW,OptV}. A set 
$\K \subseteq \R^l$ is a {\em cone} if $\alpha x \in \K$ for any $\alpha >0$,  whenever $x \in \K$. Take a closed set $C \subseteq \R^l$ and a point $\bar x \in C$, the \textit{limiting normal cone} $N_C(\bar x)$ of $C$ at $\bar x$ is defined by 
\[ 
N_C(\bar x) := \left\{  \xi  \text{ : } \exists x_i \stackrel{C}{\to} \bar x,\, \xi_i \to \xi \,\,\text{ such that }\,\,  \limsup_{x \to x_i}  \frac{\xi_i \cdot (x-x_i)}{|x-x_i|} \leq 0 \ \ \forall i \right\},
\]
 in which the notation $x_{i} \stackrel{C}{\longrightarrow}\bar{x}$ is used to indicate that  all points in the sequence  $(x_i)_i$  lay in $C$.  
Take a lower semicontinuous function  $H:\R^l \to \R$  and a point $\bar x \in \R^l$, the \textit{limiting subdifferential} of $H$ at $\bar x$ is 
\[
\partial H(\bar x) := \left\{     \xi \text{ : } \exists  \xi_i \to \xi, \, x_i \to \bar x \text{ s.t. } \limsup_{x \to x_i} \frac{\xi_i \cdot (x-x_i) - H(x) + H(x_i)}{|x-x_i|}  \leq 0   \ \forall i  \right\}.
\]
 If  $H:\R^l\times\R^{h} \to \R$ is a lower semicontinuous function and $(\bar x,\bar y) \in \R^l\times\R^{h}$, we write $\partial_x H(\bar x,\bar y)$ [resp. $\partial_y H(\bar x,\bar y)$] to denote the {\em partial limiting subdifferential of $H$ at  $(\bar x,\bar y)$ w.r.t. $x$} [resp. w.r.t. $y$], and we write $\tilde\partial_{x} H(\bar x, \bar y)$ [resp. $\tilde\partial_{y} H(\bar x, \bar y)$] to denote the {\em projected limiting subdifferential w.r.t. $x$} [resp. w.r.t. $y$], i.e. the projection of the limiting subdifferential of $H(\cdot,\cdot)$ at $(\bar x,\bar y)$ onto the $x$-coordinate [resp. $y$-coordinate]. 
Given a locally Lipschitz continuous function $G:\R^k\to\R^l$ and $\bar x \in \R^k$, we write $D G(\bar x)$ to denote the {\em Clarke generalized Jacobian of $G$ at $\bar x$}, defined as 
\[
D G(\bar x) :=\text{\,co\,} \, \left\{    \xi    \text{: } \exists  \   x_i \stackrel{\text{diff}(G) \setminus \{ \bar x\}}{\longrightarrow}  \bar x  \text{ and }   \nabla G(x_i) \to \xi       \right\},
\] 
where $\nabla G$ denotes the classical Jacobian matrix of $G$ and diff$(G)$ denotes the full measure set of differentiability points of $G$. We recall that the set-valued map $x\leadsto DG(x)$ has nonempty, compact, convex values  and is upper semicontinuous. 

\section{BV trajectories of delayed impulsive control systems}\label{S1} 
 In this section we introduce in a rigorous way the concepts of  control  and  trajectory for an impulsive control system with $N$ time delays $0<h_1<\dots<h_N$, of the form 
\bel{imp_del_sys}
\begin{cases} 
  dx(t)=f(t,x(t),x(t-h_1),\dots,x(t-h_N),\alpha(t))\,dt+G(t,\alpha(t))\mu(dt), \ \  t\in[0,T], \\
   x(t) = \zeta(t) \ \  \text{a.e. }t\in [-h,0[ \qquad(h=h_N),
\end{cases}
\eeq
 and establish the main properties of the set of the corresponding trajectories.

\subsection{Statements and main results} 
We define the set  $\U$ of {\em  impulsive controls} as follows:
\bel{adm_imp_c}
\U:=\left\{(\mu,\alpha): \  \ \mu\in C^*(\K), \ \ \alpha\in\A_\mu, \  t\mapsto G(t,\alpha(t)) \ \text{$\mu$-integrable}\right\},
\eeq 
where $$\A_\mu:= \{\alpha:[0,T]\to\R^q: \   \text{$\alpha$ is  Borel measurable and } \alpha(t)\in A(t)  \text{ a.e. and $\mu$-a.e.} \}.$$ A solution to \eqref{imp_del_sys} is defined as a bounded variation function $x:[-h,T]\to\R^n$ satisfying
 \bel{delay_sys}
\begin{cases}
\ds x(t)= x(0) +\int_0^t f\left(s,\{x(s-h_k)\},\alpha(s)\right)ds +  \int_{[0,t]}G(s, \alpha(s)) \mu(ds) \ \ \forall t\in]0,T],  \\
x(t) = \zeta(t) \quad \text{a.e. }t\in [-h,0[,
\end{cases}
\eeq 
where we write $\{x(t-h_k)\}$ in place of $\{x(t-h_k)\}_{k=0}^N :=(x(t),x(t-h_1),\dots,x(t-h_N))$ and use the notion  \eqref{def_integrals} of $\mu$-integrability. We will refer  to $x$ as  an {\em impulsive trajectory} (or simply as a trajectory)  associated with $(\mu,\alpha)$ and to the triple  $(\mu,\alpha,x)$ as an {\em impulsive process} (or simply as a process) for \eqref{imp_del_sys}.   
When the measure $\mu$ is absolutely continuous w.r.t. the Lebesgue measure $\ell$, so that there exists   $\w\in L^1([0,T],\K)$   such that $\mu(dt)=\w(t)\,dt$, the trajectory $x$ is absolutely continuous and the impulsive control system \eqref{imp_del_sys} becomes the following conventional control system with time delays,
\bel{reg_del_sys}
\begin{cases}
 \dot x(t)=f(t,\{x(s-h_k)\},\alpha(t))+G(t,\alpha(t))\w(t), \qquad  t\in[0,T], \\
  x(t) = \zeta(t) \qquad \text{a.e. } t\in [-h,0[ .
\end{cases}
\eeq
As in the case without delays, the impulsive control system \eqref{imp_del_sys} can be seen as an extension of \eqref{reg_del_sys}. For this reason, a process  $(\mu,\alpha,x)$ with $\mu\ll\ell$ will be referred to as a {\em strict sense process} for \eqref{imp_del_sys}. 
 
\vsm
We  shall consider  the following   hypotheses:  
\vsm
{\em  \begin{itemize} 
\item[{\bf (H1)}] The set-valued map $[0,T]\ni t\rightsquigarrow A(t)\subseteq\R^q$ has graph  {\rm Gr}$(A):=\{(t,a) \text{ : } a\in A(t)  \}$ which is   $\Ll\times\BB^q$-measurable. \footnote{$\Ll\times\BB^q$ denotes the product $\sigma$-algebra of $\Ll$ and $\BB^q$.}  
\vsmm
\item[{\bf (H2)}] The function $\zeta$ belongs to $L^\infty([-h,0],\R^n)$. For every $\{z_k\}\in(\R^n)^{N+1}$, the function $(t,a)\mapsto f(t, \{z_k\}, a)$ is $\Ll\times\BB^q$-measurable and,   for any $M>0$, there exists $L_M\in L^1([0,T], \R_{\geq 0})$ such that 
\bel{lipschitz_f}
\begin{array}{l} |f(t,\{ z_k\},a) - f(t,\{y_k\},a)| \leq L_M(t) |\{z_k -y_k\}|,  \\[1.0ex] 
\qquad\qquad\qquad\qquad \forall \{z_k\},\, \{y_k\} \in M\,\B_{n(N+1)}, \  \forall a\in A(t), \ \text{ a.e. } t\in[0,T].
\end{array}
\eeq
\vsmm
\item[{\bf (H3)}] There exists $c \in L^1([0,T], \R_{\geq 0})$ such that 
\bel{bound_f}
\sup_{a\in A(t)} |f(t, \{ z_k\},a)| \leq c(t) (1+ |\{z_k\}|) \qquad\text{ $\forall \{z_k\}\in(\R^n)^{N+1}$, \ a.e. $t\in[0,T]$.}
\eeq
\vsmm
\item[{\bf (H4)}] The set valued function $t\leadsto \overline{\rm co} \,\g(t,A(t),\tilde \K)$, $t\in[0,T]$,  is uniformly bounded and continuous with respect to the Hausdorff metric, where $\g:\R\times\R^q\times\R^m\to\R^n$ is the function defined by
\bel{tilde_g}
\ds \g(t,a,w) := \frac{G(t,a)\cdot w}{1+\sum_{i=1}^n |\sum_{j=1}^m g_{ij}(t,a) w^j|},
\eeq
with the $g_{ij}$'s being the components of $G$ and $\tilde \K := \big\{w\in \K: \ \   |w^j| \leq 1, \ j=1,\dots,m \big\}$.
\end{itemize}
}
%
%


\begin{rem}\label{RH4} Hypothesis (H4)   is a kind of continuity assumption on a compactification  of the fast dynamics  $G$.  As we will see in Prop. \ref{Th_equivalenza} below,  (H4) allows us  to replace  without loss of generality the original control system with an equivalent {\em auxiliary}  control system with time delays, still impulsive but  in which the vector-valued measure $\mu$ is replaced by a nonnegative and scalar measure $\nu$ and an ordinary control $\w\in L^1([0,T],\tilde \K)$, while  the possibly highly irregular and unbounded term ``$G(t,a)$" is replaced by  ``$\g(t,a,w)$". 
In the approximation results, where essentially we want to deduce from the convergence of suitable scalar auxiliary controls $\nu_i\weak\nu$, the convergence of the corresponding vector-valued measures $\mu_i\weak\mu$ (see Prop. \ref{prop_density} and Lemma \ref{lemma_dense} below), we replace assumption (H4) with the following stronger condition (H4)$^*$:
{\em 
\begin{itemize}
\item[{\bf (H4)$^*$}]  The set valued function $t\leadsto \overline{\rm co} \,(\h,\g)(t,A(t),\tilde \K)$, $t\in[0,T]$,  is uniformly bounded and continuous with respect to the Hausdorff metric, where $\g$ is as in \eqref{tilde_g} and $\h:\R\times\R^q\times\R^m\to\R^m$ is the function defined by
\bel{tilde_g0}
\ds\h(t,a,w) := \frac{w}{1+\sum_{i=1}^n |\sum_{j=1}^m g_{ij}(t,a) w^j|}.  
\eeq
\end{itemize}}

Both assumptions (H4) and (H4)$^*$ are automatically satisfied by autonomous systems and also by systems where the multifunction $t\leadsto G(t,A(t))$ is uniformly bounded on $[0,T]$ and continuous in the sense of Kuratowski. 
\end{rem}

Let us state some fundamental properties of impulsive trajectories. 
\begin{prop}[Well-posedness]\label{glob_ex} Assume hypotheses {\rm (H1)}--{\rm (H3)}. Then, for any  control $(\mu,\alpha)\in\U$ and any initial condition $\xi\in\R^n$,  there exists one and only one impulsive trajectory $x$ of \eqref{imp_del_sys} associated with $(\mu,\alpha)$ and such that $x(0)=\xi$.
\end{prop}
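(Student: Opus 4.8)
The plan is to exploit the structural feature that $G$ does not depend on the state, so that the impulsive term is a known forcing, and then to reduce the delayed problem to a finite chain of ordinary (non-delayed) Carath\'eodory integral equations by the method of steps. First I would set
\[
z(t) := \int_{[0,t]} G(s,\alpha(s))\,\mu(ds),
\]
which is well defined because $t\mapsto G(t,\alpha(t))$ is $\mu$-integrable in the definition \eqref{adm_imp_c} of $\U$, and which is an $\R^n$-valued $BV$ function, hence bounded, say $\|z\|_{L^\infty}=:B_z<\infty$. With this notation \eqref{delay_sys} reads $x(t)=\xi+\int_0^t f(s,\{x(s-h_k)\},\alpha(s))\,ds+z(t)$ for $t\in]0,T]$, so the measure enters only as an additive, state-independent forcing term. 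The substitution $y:=x-z$ will then turn each impulsive part into an absolutely continuous unknown, confining the $BV$ irregularity to the \emph{known} function $z$.

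Before constructing a solution I would establish an a priori bound, which is what allows the use of the merely local Lipschitz hypothesis {\rm (H2)}. If $x$ solves \eqref{delay_sys}, then combining the growth bound {\rm (H3)} with $|\{x(s-h_k)\}|\le\sum_{k=0}^N|x(s-h_k)|$ and estimating the negative-time arguments by $\|\zeta\|_{L^\infty}$ gives
\[
|x(t)|\le C_0+(N+1)\int_0^t c(s)\,\phi(s)\,ds,\qquad \phi(t):=\sup_{-h\le\tau\le t}|x(\tau)|,
\]
with $C_0$ depending only on $|\xi|$, $B_z$, $\|\zeta\|_{L^\infty}$ and $\|c\|_{L^1}$. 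Since the right-hand side is nondecreasing, Gronwall's inequality yields $\phi(T)\le M_0$ for a constant $M_0$ depending only on the data. Thus every solution lies in $M_0\B$, and I may freely replace $f$ by a truncation that is globally Lipschitz in the state with constant $L_{M_0}\in L^1$, solve the truncated problem, and observe a posteriori that its solution satisfies the bound and hence solves the original equation.

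Next I would run the method of steps on a partition $0=\tau_0<\tau_1<\dots<\tau_P=T$ with $\tau_{j+1}-\tau_j\le h_1$. On $[-h,0]$ the trajectory is prescribed by $\zeta$ and $x(0)=\xi$; assuming $x$ already constructed and unique on $[-h,\tau_j]$, for $s\in[\tau_j,\tau_{j+1}]$ every delayed argument with $k\ge1$ satisfies $s-h_k\le\tau_{j+1}-h_1\le\tau_j$, hence lies in the already-known region, so the drift depends on the unknown only through $x(s)=x(s-h_0)$. Writing $y=x-z$, the equation on this step becomes an absolutely continuous Carath\'eodory problem $y(t)=\text{(known)}+\int_{\tau_j}^t f(s,y(s)+z(s),\text{known},\alpha(s))\,ds$, whose right-hand side is measurable in $s$, Lipschitz in $y$ with integrable constant $L_{M_0}$, and has linear growth. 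Existence and uniqueness of $y$ on the whole step then follow from the Banach fixed point theorem in $C([\tau_j,\tau_{j+1}],\R^n)$ endowed with a Bielecki-type weighted norm $\|y\|:=\sup_t e^{-2\int_{\tau_j}^t L_{M_0}}\,|y(t)|$, which makes the integral operator a contraction regardless of the step length. Setting $x=y+z$ recovers the unique $BV$ solution on $[\tau_j,\tau_{j+1}]$; concatenating over the finitely many steps, and noting that the jumps of $x$ coincide with the atoms of $z$, produces the unique $BV$ solution on $[-h,T]$, uniqueness on the whole interval being obtained by induction from uniqueness on each step.

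The main delicate point, I expect, is the interplay between the possibly discontinuous impulsive forcing and the composition defining the drift: one must check that $s\mapsto f(s,\{x(s-h_k)\},\alpha(s))$ is genuinely Lebesgue measurable and, thanks to the a priori bound together with {\rm (H3)}, integrable, even though each $x(\cdot-h_k)$ is only $BV$ and $\alpha$ is merely a measurable selection. This follows from the Carath\'eodory structure of $f$ --- joint $\Ll\times\BB^q$-measurability from {\rm (H2)} combined with Lipschitz continuity in the state --- but it is precisely what guarantees that the step-by-step integral equations are well posed despite the irregularity of $\mu$. The remaining care is essentially bookkeeping: removing the truncation correctly via the a priori bound, and keeping the right-continuity convention for $z$ (and hence for $x$) consistent across the partition points $\tau_j$.
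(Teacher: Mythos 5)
Your proposal is correct, and its core is the same as the paper's: you subtract the known $BV$ forcing $z(t)=\int_{[0,t]}G(s,\alpha(s))\,\mu(ds)$ to reduce \eqref{delay_sys} to a non-impulsive delayed ODE for $y=x-z$ (the paper's $\tilde x=x-\tilde x_0$), you truncate the drift so that the local Lipschitz bound of (H2) becomes global with an integrable constant, and you remove the truncation a posteriori via the Gronwall a priori bound coming from (H3) --- all exactly as in the paper. The one place you diverge is the final existence--uniqueness step for the resulting delayed Carath\'eodory equation: the paper simply invokes \cite[Thm.~4.1]{VB}, whereas you prove it from scratch by the method of steps on a partition of mesh at most $h_1$, reducing each step to an undelayed Picard problem solved by contraction in a Bielecki weighted norm. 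Your route is more elementary and self-contained (it uses only that all genuine delays are bounded below by $h_1>0$, plus the Carath\'eodory measurability you correctly flag), at the cost of the bookkeeping across partition points and the slight imprecision that the cutoff-type truncation changes the Lipschitz constant from $L_{M_0}$ to $L_{2M_0}+c(\cdot)(1+2M_0+\cdots)$ --- harmless, since any integrable constant works in the Bielecki norm. The paper's citation-based route is shorter and reuses the same theorem of \cite{VB} that it needs anyway (as a Filippov-type estimate) in the proof of Lemma \ref{lemma_convergenza}.
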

\begin{proof} Given a control $(\mu,\alpha)\in\U$ and an initial condition $\xi\in\R^n$, let us consider the following (non-impulsive) ODE with time delays
\bel{AC_sys}
\left\{\begin{array}{l}
\dot {\tilde x}(t)=\varphi(t, \{\tilde x(t-h_k)\}) \qquad \text{a.e. }t\in[0,T], \\[1.0ex]
\tilde x(t)=\zeta(t) \quad \text{a.e. } t\in[-h,0[, \ \ \tilde x(0)=\xi,
\end{array}\right.
\eeq
where $\varphi:[0,T]\times (\R^n)^{N+1}\to\R^n$ is given by
$$
\varphi(t,\{z_k\}):=f(t,\{z_k+\tilde x_0(t-h_k)\},\alpha(t)) \qquad \forall t\in[0,T],\ \forall \{z_k\}\in(\R^n)^{N+1},
$$
and $\tilde x_0:[-h,T]\to\R^n$ is defined as
$$
\tilde x_0(t):=\int_{[0,t]}G(s,\alpha(s))\cdot\mu(ds), \ \  t\in]0,T], \quad \tilde x_0(t):=0 \ \ \forall t\in[-h,0].
$$
In particular, $\tilde x_0$ is a BV function uniquely determined by the control pair $(\mu,\alpha)$, and the function $t\mapsto \varphi(t,\{z_k\})$ is $\Ll$-measurable for any $\{z_k\}\in(\R^n)^{N+1}$. 

\vsm
The proof is complete as soon as we show that \eqref{AC_sys} admits a unique solution $\tilde x$, since $x:=\tilde x+ \tilde x_0$ turns out to be the unique solution of \eqref{imp_del_sys} associated with $(\mu,\alpha)$ and such that $x(0)=\xi$. Indeed, if  there are two solutions $x$, $x'$, with   $x'\neq x$,  to  \eqref{imp_del_sys} associated with $(\mu,\alpha)$ and such that $x(0)=\xi$, then  it follows immediately   that \eqref{AC_sys}  also has the distinct solutions $\tilde x:=x  -\tilde x_0$ and $\tilde x' := x' -\tilde x_0$.

  To this aim,  we define  $M>0$ and $\tilde c\in L^1([0,T],\R_{\geq0})$,   as
\bel{Mtildec}
\begin{split}
&M:= \Big[ |\xi| + \big(1+(N+1) \| \zeta \|_{L^{\infty}(-h,0)}\big) \|\tilde c\|_{L^1(0,T)} \Big] e^{(N+1)\| \tilde c \|_{L^1(0,T)}}, \\ 
&\tilde c(t):= \big(1+(N+1)\|\tilde x_0 \|_{L^\infty(0,T)}\big) \,c(t),
\end{split}
\eeq
where $c$ is as in (H3). Hence,  we introduce a function $\eta\in C^\infty((\R^n)^{N+1}, [0,1])$ which is equal to 1 in $M \B_{n(N+1)}$, equal to 0 in $(\R^n)^{N+1}\setminus 2M \B_{n(N+1)}$, and such that $\|\nabla\eta(\{z_k\})\|_{L^\infty}\leq 1$. Using a standard truncation technique,  in place of \eqref{AC_sys}  we consider the following ODE with time delays
\bel{AC_sys_aux}
\left\{\begin{array}{l}
\dot {\tilde x}(t)=\tilde\varphi(t, \{\tilde x(t-h_k)\}) \qquad \text{a.e. }t\in[0,T], \\[1.0ex]
\tilde x(t)=\zeta(t) \ \ \text{a.e. }t\in[-h,0[, \ \ \tilde x(0)=\xi,
\end{array}\right.
\eeq
where, the function $\tilde \varphi:[0,T]\times (\R^n)^{N+1}\to\R^n$ is given by
\[
\tilde \varphi(t,\{z_k\}) := \eta(\{z_k\})  \varphi(t,\{z_k\}) \qquad\forall t\in[0,T],\ \forall \{z_k\}\in(\R^n)^{N+1}.
\]
By (H3), for any $t\in[0,T]$ and any $\{z_k\}\in(\R^n)^{N+1}$,  one has
\bel{stima_tildefi}
\begin{split}
|\tilde \varphi(t,\{z_k\})| &\leq |\varphi(t,\{z_k\})| = |f(t, \{z_k + \tilde x_0(t-h_k)\}, \alpha(t))| \\
&\leq c(t) \big(1+(N+1) \| \tilde x_0 \|_{L^\infty(0,T)} + |\{z_k\}|\big) \leq \tilde c(t) (1+|\{z_k\}|),
\end{split}
\eeq
so that the very definition of $\tilde \varphi$ implies
\bel{stima_tildefi2}
|\tilde \varphi(t,\{z_k\})| \leq \tilde c(t) (1+2M).
\eeq
Moreover,  
there exists a function  $\tilde L\in L^1([0,T],\R_{\geq0})$ such that 
\bel{glob_int_lipschitz}
|\tilde\varphi(t,\{z_k\}) - \tilde\varphi(t,\{y_k\})| \leq \tilde L(t) |\{z_k-y_k\}| \qquad \text{ a.e. }t, \ \forall \{z_k\},\, \{y_k\}\in(\R^n)^{N+1}.
\eeq
Indeed, the  inequality $|\tilde \varphi(t,\{z_k\})-\tilde \varphi(t,\{y_k\})|$ is  zero when  both $\{z_k\}$, $\{y_k\}\in (\R^n)^{N+1}\setminus 2M\B_{n(N+1)}$, while, if, e.g.  $\{y_k\}\in (\R^n)^{N+1}\setminus 2M\B_{n(N+1)}$, so that $\eta(\{y_k\})=0$,  but $\{z_k\}\in  2M\B_{n(N+1)}$, we have
\[
\begin{split}
|\tilde\varphi(t,\{z_k\}) - \tilde\varphi(t,\{y_k\})| &= |\eta(\{z_k\})| \,|\varphi(t,\{z_k\})| \\
&=|\eta(\{z_k\})-\eta(\{y_k\})| \,|f(t, \{z_k+ \tilde x_0(t-h_k)\},\alpha(t))| \\
& \leq  c(t) \big(1+2M +(N+1) \| \tilde x_0\|_{L^\infty(0,T)}\big) \,|\{z_k\}-\{y_k\}|.
\end{split}
\]
If instead both $\{z_k\}$,  $\{y_k\}\in  2M\B_{n(N+1)}$,   we get
\[
\begin{split}
&|\tilde\varphi(t,\{z_k\}) - \tilde\varphi(t,\{y_k\})| \\
&\qquad\leq |\eta(\{z_k\})|\,|f(t, \{z_k+ \tilde x_0(t-h_k)\},\alpha(t)) -f(t, \{y_k + \tilde x_0(t-h_k)\},\alpha(t))| \\
&\qquad\qquad\qquad + |\eta(\{z_k\})-\eta(\{y_k\})|\, |f(t, \{y_k+ \tilde x_0(t-h_k)\},\alpha(t))|  \\
&\qquad\leq  L_{2M}(t) \,|\{z_k\}-\{y_k\}|+ c(t) \big(1+ 2M+ (N+1)\| \tilde x_0\|_{L^\infty(0,T)}\big)\, |\{z_k\}-\{y_k\}|,
\end{split}
\]
where $L_{2M}$ is as in (H2), for $M$ as in \eqref{Mtildec}.
Hence, \eqref{glob_int_lipschitz} holds with $\tilde L(t):= L_{2M}(t) +  c(t) \big(1+ 2M+ (N+1)\| \tilde x_0\|_{L^\infty(0,T)}\big)$.  At this point,    \eqref{AC_sys_aux} has a unique solution $\tilde x$ by \cite[Thm. 4.1]{VB}.

 In order to conclude, it remains to show that $\|\tilde x\|_{L^\infty(0,T)}\leq M$. From \eqref{stima_tildefi} it follows that
\[
|\tilde x(t)| \leq |\xi| + \int_0^t |\tilde\varphi(s, \{\tilde x(s-h_k)\})|\,ds \leq |\xi| + \int_0^t \tilde c(s) (1+|\{\tilde x(s-h_k)\}|) \, ds 
\]
for all $t\in[0,T]$, so that we obtain
\[
\|\tilde x\|_{L^\infty(0,t)}\leq |\xi| + \int_0^t \tilde c(s) \big[1+(N+1)\big(\|\tilde x\|_{L^\infty(0,s)} + \|\zeta\|_{L^\infty(-h,0)}\big)  \big]\, ds.
\]
A straightforward application of the Gronwall Lemma to the function $t\mapsto \|\tilde x\|_{L^\infty(0,t)}$, and the very definition of $M$ in \eqref{Mtildec}, imply that $\|\tilde x\|_{L^\infty(0,T)}\leq M$. Hence,  $\tilde x$ is the unique solution to \eqref{AC_sys} and $x=\tilde x+\tilde x_0$ is the unique solution to \eqref{imp_del_sys} associated with $(\mu,\alpha)$ and such that $x(0)=\xi$. The proof is thus complete.
\end{proof}
Next proposition establishes that  the set of strict sense trajectories is dense, in a suitable sense,  in the set  of  impulsive, BV trajectories.
 \begin{prop}[Density]\label{prop_density}
 Let hypotheses {\rm (H1)}--{\rm (H3)} and {\rm (H4)$^*$} be satisfied. Then, for any impulsive process  $(\bar \mu,\bar \alpha,\bar x)$,  there exists a sequence of strict sense processes $(\mu_i,\alpha_i,x_i)_i$, such that
$$
\mu_i \weak \bar\mu, \quad \ell\big(\{t\in[0,T]: \ \ \alpha_i(t)=\bar\alpha(t)\}\big)\to T,\quad dx_i \weak d\bar x,  \quad x_i(t)\to \bar x(t) \ \ \forall t\in\E,
$$  
where $\E$ is a full measure subset of $[0,T]$ which contains $0$ and $T$. 
 \end{prop}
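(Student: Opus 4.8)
The plan is to avoid approximating the vector measure $\bar\mu$ directly and instead work in the \emph{scalarized} auxiliary framework of Proposition~\ref{Th_equivalenza}, where the fast field is bounded. Set $\bar\nu:=|\bar\mu|\in C^\oplus$ and $\bar w:=\frac{d\bar\mu}{d|\bar\mu|}\in\tilde\K$, and recall that the compactified maps of \eqref{tilde_g}--\eqref{tilde_g0} satisfy $G(t,a)\,\h(t,a,w)=\g(t,a,w)$. With $d\bar\nu':=\big(1+\sum_{i}\big|\sum_j g_{ij}(\cdot,\bar\alpha)\,\bar w^j\big|\big)\,d|\bar\mu|$ (a \emph{finite} measure, since $G(\cdot,\bar\alpha)$ is $\bar\mu$-integrable by the definition of $\U$) one gets the identities $\bar\mu(dt)=\h(t,\bar\alpha(t),\bar w(t))\,\bar\nu'(dt)$ and $G(t,\bar\alpha)\bar\mu(dt)=\g(t,\bar\alpha,\bar w)\,\bar\nu'(dt)$, so that $\bar x$ solves the delayed system driven by the \emph{bounded} field $\g$ against the nonnegative scalar measure $\bar\nu'$. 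Hence it suffices to produce absolutely continuous scalar measures $\nu_i=m_i\,dt\weak\bar\nu'$ with $m_i\ge0$, together with admissible controls $(\alpha_i,w_i)$, and then define $\mu_i(dt):=\h(t,\alpha_i(t),w_i(t))\,\nu_i(dt)=\h(t,\alpha_i,w_i)\,m_i\,dt$; since $\h$ takes values along the cone $\K$ and $m_i\ge0$, each $\mu_i$ is an absolutely continuous $\K$-valued measure, so $(\mu_i,\alpha_i)$ is a strict sense control and the corresponding $x_i$ (with $x_i(0)=\bar x(0)$) exists and is unique by Proposition~\ref{glob_ex}.

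Second, I would build the scalar approximation. Write $\bar\nu'=\bar\nu'_{\rm ac}+\bar\nu'_{\rm s}$ with $\bar\nu'_{\rm ac}\ll\ell$ and $\bar\nu'_{\rm s}$ singular, the latter supported on a Lebesgue-null set $S$. Mollifying $\bar\nu'$ yields $\nu_i=m_i\,dt\weak\bar\nu'$ with $m_i\ge0$, the excess mass of $\nu_i$ over $\bar\nu'_{\rm ac}$ being concentrated on a neighborhood $S_i\supseteq S$ with $\ell(S_i)\to0$. Off $S_i$ I keep $(\alpha_i,w_i)=(\bar\alpha,\bar w)$; on $S_i$ I use (H1) together with a measurable selection inside the reachable set $\overline{\rm co}\,(\h,\g)(t,A(t),\tilde\K)$ to choose controls reproducing the averaged fast direction associated with the spread-out singular mass. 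This guarantees $\alpha_i(t)\in A(t)$, $w_i(t)\in\tilde\K$, and $\ell(\{t:\alpha_i(t)=\bar\alpha(t)\})\ge T-\ell(S_i)\to T$.

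Third, I would pass to the limit. Because $\g$ and $\h$ are bounded and, by (H4)$^*$, the set $t\rightsquigarrow\overline{\rm co}\,(\h,\g)(t,A(t),\tilde\K)$ is continuous for the Hausdorff metric, the convergence $\nu_i\weak\bar\nu'$ transfers to the two vector fields simultaneously: a closure/selection argument — precisely the content of Lemma~\ref{lemma_dense}, and exactly why (H4)$^*$ bundles $\h$ together with $\g$ — yields both $\mu_i=\h(\cdot,\alpha_i,w_i)\,\nu_i\weak\h(\cdot,\bar\alpha,\bar w)\,\bar\nu'=\bar\mu$ and $\int_{[0,\cdot]}\g(s,\alpha_i,w_i)\,\nu_i(ds)\to\int_{[0,\cdot]}\g(s,\bar\alpha,\bar w)\,\bar\nu'(ds)$ at continuity points. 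Feeding this into the delayed dynamics and running the Gronwall estimate of Proposition~\ref{glob_ex} together with the input--output continuity of Theorem~\ref{input_output}, I obtain $dx_i\weak d\bar x$ and pointwise convergence $x_i(t)\to\bar x(t)$ on the complement $\E$ of the countable, hence Lebesgue-null, set of atoms of $\bar\mu$, with $0,T\in\E$.

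The hard part will be the third step: controlling the fast term when $G$ may be unbounded and merely measurable in $t$, so that plain weak-$*$ convergence against the discontinuous integrand $G(\cdot,\alpha_i(\cdot))$ is unavailable. The compactification $\g$ and hypothesis (H4)$^*$ are designed for exactly this, but the time-delay structure adds a genuine complication: spreading an atom into a short fast arc perturbs the delayed arguments $x_i(\cdot-h_k)$ further downstream, so one must verify that these local modifications do not corrupt the delayed feedback. This is controlled because the perturbations live on $S_i$ with $\ell(S_i)\to0$ while the delayed arguments are evaluated at points of the full-measure set $\E$ where convergence already holds; a careful bookkeeping of the shrinking arc-lengths against the $L^1$-moduli $L_M,c$ of (H2)--(H3) then closes the Gronwall loop uniformly in $i$.
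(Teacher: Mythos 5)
Your overall route coincides with the paper's: scalarize via the auxiliary system of Prop.~\ref{Th_equivalenza} (your $\bar\nu'$ is exactly the paper's $\bar\nu$ from \eqref{nu}), approximate the auxiliary control by strict sense auxiliary controls, recover $\mu_i=\h(\cdot,\alpha_i,\w_i)\,m_i\,dt$, and pass to the limit in the delayed dynamics. Two caveats. First, your hand-built version of the approximation step (mollify $\bar\nu'$, then pick ``a measurable selection inside $\overline{\rm co}\,(\h,\g)(t,A(t),\tilde\K)$'' on the exceptional set $S_i$) does not work as literally stated: a point of the \emph{convex hull} need not be of the form $(\h,\g)(t,a,w)$ for any single admissible $(a,w)$, so realizing it requires a chattering/relaxation argument, not a selection. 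Since you simultaneously defer to Lemma~\ref{lemma_dense} --- which is precisely the paper's black box for producing $(m_i,\alpha_i,\w_i)$ with $m_i\,dt\weak\bar\nu$, $\ell(\{(\alpha_i,\w_i)\neq(\bar\alpha,\bar\w)\})\to0$ and $(\g,\h)(\cdot,\alpha_i,\w_i)m_i\,dt\weak(\g,\h)(\cdot,\bar\alpha,\bar\w)\bar\nu$ under (H4)$^*$ --- this is a redundant and flawed sketch rather than a fatal gap, but you should drop it and cite the lemma. Second, you invoke Theorem~\ref{input_output} for the limit passage, but its hypotheses ($G$ independent of $a$ and continuous) are not satisfied here; the correct tool is Lemma~\ref{lemma_convergenza} (applied with $\gamma_i(dt)=\g(t,\alpha_i,\w_i)m_i(t)\,dt$ and $\varphi_i(t,\{z_k\})=f(t,\{z_k\},\alpha_i(t))$, after extracting the co-countable set $\E$ from Lemma~\ref{lemma_misure}). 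That lemma also resolves, via the substitution $\tilde z_i=z_i-\int_{[0,\cdot]}\gamma_i(ds)$ and the Filippov theorem for delayed systems, the ``delayed feedback'' worry you raise at the end: the delayed arguments enter only through $\bigl|\int_{[0,t-h_k]}[\gamma_i(ds)-\gamma_0(ds)]\bigr|$, which is uniformly bounded and tends to $0$ a.e., so dominated convergence closes the Gronwall loop --- no bookkeeping over the sets $S_i$ is needed.
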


Finally, under an additional hypothesis, we  obtain the following continuity property of the input-output map.
\begin{theo}[Continuity of the input-output map]\label{input_output}
Assume hypotheses  {\rm (H1)}--{\rm (H3)}, let $G$ be independent of $a$, i.e.  $G(t,a)=\hat G(t)$, and let  $\hat G$ be continuous on $[0,T]$.
Then, the input-output map      ${\mathcal I}: \R^n\times\U\mapsto BV([-h,T],\R^n),$  given by
\bel{input-output_map}
(\bar\xi,\bar\mu,\bar\alpha)\mapsto {\mathcal I}(\bar\xi,\bar\mu,\bar\alpha)= \bar x,
\eeq
where $\bar x$  is the impulsive trajectory associated with the control $(\bar\mu,\bar\alpha)$ and such that $\bar x(0)=\bar\xi$, is  (well-defined and) continuous, in the sense that  all sequences  $(\xi_i,\mu_i,\alpha_i)_i\subset \R^n\times\U$ that satisfy 
 \bel{close_i}
 \xi_i\to\bar \xi, \quad (\mu_i,|\mu_i|)\weak(\bar \mu,\lambda) \ \ \text{for some $\lambda\in C^\oplus$,} \quad   \ell(\{t\in[0,T] \text{ : } \alpha_i(t)= \bar \alpha(t) \})\to T,
\eeq
have corresponding trajectories $x_i:= {\mathcal I}(\xi_i,\mu_i,\alpha_i)$  that satisfies
\bel{close_x}
dx_i\weak d\bar x, \qquad x_i(t)\to \bar x(t)  \quad \forall t\in\E
\eeq
where $\E$ is a full measure subset of $[0,T]$ which contains $0$ and $T$.
\end{theo}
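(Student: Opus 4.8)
The map is well defined by Proposition \ref{glob_ex}; to prove continuity the plan is to follow the decomposition used in its proof. For each $i$ I write $x_i=\tilde x_i+x_{0,i}$, where, exploiting that $G$ is independent of $a$,
\[
x_{0,i}(t):=\int_{[0,t]}\hat G(s)\,\mu_i(ds)\ \ (t\in\,]0,T]),\qquad x_{0,i}(t):=0\ \ (t\in[-h,0]),
\]
is the purely impulsive BV part and $\tilde x_i:=x_i-x_{0,i}$ is the absolutely continuous part, solving the delay ODE \eqref{AC_sys} driven by $x_{0,i}$ with $\tilde x_i(0)=\xi_i$; I set $\bar x=\tilde{\bar x}+\bar x_0$ analogously. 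The first step is a uniform a priori bound. Testing $|\mu_i|\weak\lambda$ against the constant $1$ gives $|\mu_i|([0,T])\to\lambda([0,T])$, whence $\sup_i|\mu_i|([0,T])<\infty$ and $\sup_i\|x_{0,i}\|_{L^\infty}\le\|\hat G\|_{L^\infty}\sup_i|\mu_i|([0,T])<\infty$. Inserting this into the Gronwall bound \eqref{Mtildec} produces a constant $M^*$ with $\|x_i\|_{L^\infty([-h,T])}\le M^*$ for all $i$ and, enlarging $M^*$ if needed, also $\|\bar x\|_{L^\infty([-h,T])}\le M^*$.

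Next I analyse the impulsive parts. Since $\varphi\hat G$ is continuous for every $\varphi\in C([0,T],\R)$, the convergence $\mu_i\weak\bar\mu$ yields $\int\varphi\,dx_{0,i}=\int\varphi\hat G\,\mu_i(ds)\to\int\varphi\hat G\,\bar\mu(ds)=\int\varphi\,d\bar x_0$, i.e. $dx_{0,i}\weak d\bar x_0$. To upgrade this to pointwise convergence I invoke the extra control $|\mu_i|\weak\lambda$. Set $\E:=\{0,T\}\cup\{t\in\,]0,T[\,:\,\lambda(\{t\})=0\}$, which is co-countable, hence of full measure. For an interior $t\in\E$ one sandwiches $\chi_{_{[0,t]}}$ between continuous functions coinciding with it outside $[t-\varepsilon,t+\varepsilon]$; the continuous approximants pass to the limit by $\mu_i\weak\bar\mu$, while the error is bounded by $\|\hat G\|_{L^\infty}\,|\mu_i|([t-\varepsilon,t+\varepsilon])$, whose limit superior does not exceed $\|\hat G\|_{L^\infty}\,\lambda([t-\varepsilon,t+\varepsilon])\to0$ as $\varepsilon\to0$ because $\lambda(\{t\})=0$. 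The endpoint $t=T$ is immediate, testing against $\hat G$ on all of $[0,T]$, while $t=0$ is trivial since $x_{0,i}(0)=0=\bar x_0(0)$. Thus $x_{0,i}(t)\to\bar x_0(t)$ for every $t\in\E$, and $(x_{0,i})_i$ is uniformly bounded.

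I then close the argument on the absolutely continuous parts. Writing $\delta_i(t):=\|\tilde x_i-\tilde{\bar x}\|_{L^\infty([0,t])}$, I compare the drifts $f(s,\{x_i(s-h_k)\},\alpha_i(s))$ and $f(s,\{\bar x(s-h_k)\},\bar\alpha(s))$ on $[0,t]$ by splitting into the set $\{\alpha_i=\bar\alpha\}$, where (H2) gives the bound $L_{M^*}(s)\,|\{x_i(s-h_k)-\bar x(s-h_k)\}|$, and its complement $B_i$, where (H3) bounds the integrand by $2c(s)(1+(N+1)M^*)$. Since the common history $\zeta$ makes the delayed differences vanish whenever $s-h_k<0$, and since $|x_i(s-h_k)-\bar x(s-h_k)|\le\delta_i(s)+|x_{0,i}(s-h_k)-\bar x_0(s-h_k)|$, I obtain
\[
\delta_i(t)\le \rho_i(t)+(N+1)\int_0^t L_{M^*}(s)\,\delta_i(s)\,ds,
\]
with the nondecreasing source
\[
\rho_i(t):=|\xi_i-\bar\xi|+\int_0^t L_{M^*}(s)\,|\{x_{0,i}(s-h_k)-\bar x_0(s-h_k)\}|\,ds+\int_{B_i}2c(s)(1+(N+1)M^*)\,ds.
\]
Here $\rho_i(T)\to0$: the first term by \eqref{close_i}, the second by dominated convergence (using $x_{0,i}\to\bar x_0$ a.e., the uniform bound, and $L_{M^*}\in L^1$), and the third by $\ell(B_i)\to0$ together with absolute continuity of the integral of $c\in L^1$. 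Gronwall's lemma gives $\delta_i(T)\le\rho_i(T)\,e^{(N+1)\|L_{M^*}\|_{L^1}}\to0$, so $\tilde x_i\to\tilde{\bar x}$ uniformly on $[-h,T]$; combined with the previous step, $x_i(t)\to\bar x(t)$ for every $t\in\E$.

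Finally, for $dx_i\weak d\bar x$ I show that the drift converges in $L^1$, i.e. $f(\cdot,\{x_i(\cdot-h_k)\},\alpha_i)\to f(\cdot,\{\bar x(\cdot-h_k)\},\bar\alpha)$: on $\{\alpha_i=\bar\alpha\}$ by dominated convergence (from $x_i\to\bar x$ a.e. on $\E$, with $L^1$ dominating function $2(N+1)M^*L_{M^*}$) and on $B_i$ by absolute continuity as above. Hence the drift measures converge in total variation, and adding $dx_{0,i}\weak d\bar x_0$ gives $dx_i\weak d\bar x$. The delicate point of the whole argument is the second step—deducing the pointwise convergence of the primitives $x_{0,i}$ on a full-measure set from the mere weak-$*$ convergence of the measures $dx_{0,i}$. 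This is exactly where the auxiliary hypothesis $|\mu_i|\weak\lambda$ is indispensable, as it forbids concentration of mass, and where both the continuity of $\hat G$ and its independence of $a$ are used: were $G$ to depend on $a$, the mismatch between $\alpha_i$ and $\bar\alpha$—small only in Lebesgue measure—could be amplified by the singular part of $\mu_i$ in the fast term and destroy the convergence.
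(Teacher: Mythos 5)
Your proposal is correct and follows essentially the same route as the paper: the paper's proof applies Lemma \ref{lemma_misure}(ii) to get $\int_{[0,t]}\hat G(s)\cdot\mu_i(ds)\to\int_{[0,t]}\hat G(s)\cdot\bar\mu(ds)$ on the co-countable set of non-atoms of $\lambda$, and then invokes Lemma \ref{lemma_convergenza} with $\gamma_i(dt)=\hat G(t)\mu_i(dt)$ and $\varphi_i(t,\{z_k\})=f(t,\{z_k\},\alpha_i(t))$, which is exactly the decomposition $x_i=\tilde x_i+x_{0,i}$ and the Gronwall/Filippov estimate you carry out by hand. The only difference is that you re-derive inline (via the sandwich argument at non-atoms of $\lambda$ and the explicit $\delta_i$--$\rho_i$ estimate) what the paper delegates to those two lemmas, and your identification of $\E$ matches Remark \ref{RH4}'s successor remark that $\E$ is the set of continuity points of $\lambda$.
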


The proofs of Prop. \ref{prop_density} and Thm. \ref{input_output}, which require some preliminary results,  will be given in the next subsection. 

\vsm
The additional assumptions that $G$ does not depend on $a$ and is continuous   are crucial for the validity of Thm. \ref{input_output}, as shown by the following simple,  delay-free examples. 
\begin{exa}
Consider $T=1$, $G(t,a)=a$, $A(t)\equiv [0,1]$, $\K=\R_{\geq 0}$, and $f\equiv0$. Let $\xi_i\equiv\xi_0=0$, $\mu_i\equiv\mu=\delta_{\{0\}}$ be the Dirac unit measure concentrated at $0$, $\alpha(t)\equiv 0$, and let $(\alpha_i)_i\subset \A$ be  defined by 
\[
\alpha_i(t):=
\begin{cases}
1 \qquad t\in\left[0,\frac{1}{i}\right] \\
0 \qquad \text{otherwise}.
\end{cases}
\]
Of course, $\mu_i\weak \mu$ and $\ell(\{t \text{ : } \alpha_i(t) \neq \alpha(t) \}) =\frac{1}{i} \to 0$, but $dx_i(t)=\alpha_i(t)\mu_i(dt)$ does not weakly$^*$ converge to $dx(t)=0$ in $C^*([0,1])$. Indeed, take any $\varphi\in C([0,1], \R)$ with $\varphi(0)\neq 0$, then $\int_{[0,1]} \varphi(t)\alpha_i(t) \mu_i(dt) = \varphi(0) \alpha_i(0)=\varphi(0)$ for any $i$, while $\int_{[0,1]} \varphi(t) dx(t)=0$. Furthermore, for any $t>0$, we have $x(t)=0$, while $x_i(t)=1$ for any $i>1/t$, so that  $x_i(t)\to 1\ne  x(t)$. 
\end{exa}
\begin{exa}
Let $T=1$, $G(t)=\chi_{]0,1]}(t)$, $\mu=\delta_{\{0\}}$, $f=0$, and $\mu_i(dt) = m_i(t) dt$, where $m_i(t)$ is given by
\[
m_i(t):= 
\begin{cases}
i \qquad  \forall t\in\left[0,\frac{1}{i}\right],\\
0 \qquad\text{otherwise.}
\end{cases}
\]
Take $\varphi\in C([0,1])$. By the mean value theorem, for any $i$ there exists $t_i\in\left[ 0,\frac{1}{i}\right]$ such that
\[
\lim_i \int_{[0,1]} \varphi(t) \mu_i(dt)= \lim_i \ i \int_0^{\frac{1}{i}}\varphi(t)=\lim_i\varphi(t_i) = \varphi(0)=\int_{[0,1]} \varphi(t)\mu(dt).
\]
Thus,   $\mu_i\weak \mu$. However,  we have that $\int_{[0,1]} G(t)\mu(dt)=0$  but
$\int_{[0,1]} G(t) \mu_i(dt)=\int_0^1 m_i(t)dt = 1$ for all  $i$. 
Therefore, choosing   $\varphi\equiv1$, we see that $dx_i(t) = G(t)\mu_i(dt)$ does not weakly$^*$ converge in $C^*([0,1])$ to $dx(t)=G(t)\mu(dt)$. Similarly to the previous example, for any $t>0$,   $x(t)=0$   and  $x_i(t)=1$ for any $i>1/t$, so that we still have  $x_i(t)\to 1\ne  x(t)$. 

\end{exa}

%

\subsection{Proofs of Prop. \ref{prop_density} and Thm. \ref{input_output}}
In these proofs, equivalence between the given impulsive system and an auxiliary impulsive system,  and some   approximation results for measures play a key role. So, let us start with these preliminary results. 
\vsm
Consider the set of  {\em auxiliary controls} $\tilde\U$,  given by   
  \bel{adm_aux_c}
\tilde\U:=\left\{(\nu,\alpha,\w):  \ \nu\in C^\oplus,  \ \alpha\in\A_\nu, \ \  \w\in\W_\nu, \ \  t\mapsto \g(t,\alpha(t),\w(t)) \ \text{$\nu$-integrable} \right\},
\eeq 
where $\g$ is as in \eqref{tilde_g} and, for $\tilde\K$ as in (H4), $\W_\nu$ is given by
$$\W_\nu:= \{\w:[0,T]\to\R^m, \   \text{$\w$ is Borel  measurable, }  \ \w(t)\in \tilde \K   \text{ a.e. and $\nu$-a.e.}\}.$$
We introduce the following {\em auxiliary impulsive control system} with time delays,
\bel{aux_sys_eq}
\begin{cases}
dx(t)=  f\left(t,\{x(t-h_k)\}, \alpha(t)\right)\,dt +\g(t, \alpha(t),\w(t)) \nu(dt), \ \  t\in[0,T],  \\[1.5ex]
x(t) = \zeta(t) \ \ \text{a.e. } t\in [-h,0[,
\end{cases}
\eeq 
Precisely, for any auxiliary control $(\nu,\alpha,\w)$, we call {\em auxiliary trajectory} any bounded variation function $x$,  satisfying
\bel{aux_sys}
\begin{cases}
\ds x(t)= x(0)+\int_0^t f\left(s,\{x(s-h_k)\}, \alpha(s)\right)ds + \int_{[0,t]} \g(s, \alpha(s),\w(s)) \nu(ds) \  \forall t\in]0,T],  \\[1.5ex]
x(t) = \zeta(t) \ \ \text{a.e. } t\in [-h,0[,
\end{cases}
\eeq
and   refer to $(\nu,\alpha,\w,x)$ as  an  {\it auxiliary process}.  When $\nu\ll\ell$, we say that  $(\nu,\alpha,\w,x)$ is a {\it strict sense auxiliary process}, and we call $x$  a {\it strict sense auxiliary trajectory} and  $(\nu,\alpha,\w)$  a {\it strict sense auxiliary control}.

\begin{prop} \label{Th_equivalenza}
There is a one-to-one correspondence between processes for  \eqref{imp_del_sys} and auxiliary processes.  Precisely, we have that
\begin{itemize}
\item[(i)] given a process $(\mu,\alpha,x)$ for \eqref{imp_del_sys}, the four-tuple $(\nu,\alpha,\w,x)$, where   
\bel{nu}
\w=\frac{d\mu}{d|\mu|}, \qquad \nu(dt) := \big(1+{\sum}_{i=1}^n \big|{\sum}_{j=1}^m g_{ij}(t,\alpha(t)) \w^j(t) \big|\big) |\mu|(dt),
\eeq
 is an auxiliary process;
 \item[(ii)] given an auxiliary process $(\nu,\alpha,\w,x)$, then the triple  $(\mu,\alpha,x)$, where $\mu$ is   given by
\bel{mu}
\mu(dt) := \frac{\w(t) \nu(dt)}{1+{\sum}_{i=1}^n \big|{\sum}_{j=1}^m g_{ij}(t,\alpha(t)) \w^j(t) \big|},
\eeq
is a process for \eqref{imp_del_sys}.
\end{itemize}
In addition, a process  $(\mu,\alpha,x)$ is strict sense if and only if the corresponding auxiliary process  $(\nu,\alpha,\w,x)$ is strict sense.
\end{prop}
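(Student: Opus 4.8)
The plan is to read the two formulas \eqref{nu} and \eqref{mu} as mutually inverse changes of the reference measure, all governed by the single scalar factor
\[
\rho(t):= 1+{\sum}_{i=1}^n\big|{\sum}_{j=1}^m g_{ij}(t,\alpha(t))\,\w^j(t)\big|\ \ge\ 1 .
\]
With this notation \eqref{nu} reads $\nu(dt)=\rho(t)\,|\mu|(dt)$ together with $\w=\tfrac{d\mu}{d|\mu|}$, while \eqref{mu} reads $\mu(dt)=\rho(t)^{-1}\w(t)\,\nu(dt)$; moreover $\g(t,\alpha(t),\w(t))=\rho(t)^{-1}G(t,\alpha(t))\w(t)$ by \eqref{tilde_g}. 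The two constructions compose to the identity, since $\nu=\rho|\mu|$ gives $\rho^{-1}\w\,\nu=\w|\mu|=\mu$, so they furnish a genuine bijection provided each lands in the correct class and preserves $x$. The heart of the matter is then the single identity
\[
\int_{[0,t]}G(s,\alpha(s))\,\mu(ds)=\int_{[0,t]}\g(s,\alpha(s),\w(s))\,\nu(ds)\qquad\forall t\in]0,T],
\]
because the drift term $f$, the history $\zeta$ and the value $x(0)$ are identical in \eqref{delay_sys} and \eqref{aux_sys}; hence one and the same BV function $x$ solves \eqref{delay_sys} for $\mu$ if and only if it solves \eqref{aux_sys} for $(\nu,\w)$. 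By \eqref{def_integrals} both sides reduce to $\int_{[0,t]}G(s,\alpha(s))\w(s)\,|\mu|(ds)$, so the identity is immediate once the reference-measure relations are in place.

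For part (i) I would start from a process $(\mu,\alpha,x)$, set $\w=\tfrac{d\mu}{d|\mu|}$ and define $\nu$ by \eqref{nu}; that $\nu\in C^\oplus$ is clear from $\rho\ge1$ and $|\mu|\ge0$. To see $\w\in\W_\nu$ I would invoke two facts about the Radon--Nikodym derivative: first, since $\mu$ is valued in the closed convex cone $\K$ and $|\mu|\ge0$, the derivative $\w(t)$ lies in $\K$ for $|\mu|$-a.e.\ $t$ (a Besicovitch-averaging argument, using $\mu(B)/|\mu|(B)\in\K$ and $\K$ closed); second, since $|\mu^j|\le|\mu|$ as measures, one has $|\w^j|=\tfrac{d|\mu^j|}{d|\mu|}\le1$, so $\w(t)\in\tilde\K$ off a $|\mu|$-null set. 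As $\rho\ge1$ makes $\nu$ and $|\mu|$ mutually absolutely continuous, the same holds $\nu$-a.e., and after modifying $\w$ on a null set (where one may take $\w\equiv0\in\tilde\K$) it holds Lebesgue-a.e.\ as well, giving a Borel representative in $\tilde\K$. Finally, $\nu$-integrability of $t\mapsto\g(t,\alpha(t),\w(t))$ is \emph{equivalent} to $\mu$-integrability of $t\mapsto G(t,\alpha(t))$, since $\int|\g|\,\nu=\int\rho^{-1}|G\w|\,\rho\,|\mu|=\int|G\w|\,|\mu|$; together with the integral identity this shows $(\nu,\alpha,\w,x)$ is an auxiliary process.

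Part (ii) is the symmetric reading: from an auxiliary process I define $\mu$ by \eqref{mu}, i.e.\ $\mu=\rho^{-1}\w\,\nu$. Because $\w(t)\in\tilde\K\subseteq\K$, $\rho>0$ and $\K$ is a cone, the density $\rho^{-1}\w$ is $\K$-valued, and integrating a $\K$-valued density against the nonnegative measure $\nu$ keeps the values of $\mu$ in $\K$ (here closedness and convexity of $\K$ are used), so $\mu\in C^*(\K)$; that $\mu\ll\nu$ transfers $\alpha(t)\in A(t)$ from $\nu$-a.e.\ to $\mu$-a.e., giving $\alpha\in\A_\mu$, and $\mu$-integrability of $G$ follows from the same computation as above. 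The reverse integral identity is again the one-line substitution, so $(\mu,\alpha,x)$ solves \eqref{delay_sys}. For the last assertion, $\nu=\rho|\mu|$ with $1\le\rho$ shows $\nu$ and $|\mu|$ share the same null sets, whence $\mu\ll\ell\iff|\mu|\ll\ell\iff\nu\ll\ell$, i.e.\ strictness is preserved in both directions.

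I expect the only genuinely delicate points to be the verification that $\w$ takes values in $\tilde\K$ --- the bound $|\w^j|\le1$ relies on the component-wise definition $|\mu|=\sum_j(\mu_+^j+\mu_-^j)$ fixed in Subsection~\ref{sub1.1} --- and the membership $\mu\in C^*(\K)$, together with the bookkeeping needed to reconcile the several ``a.e.''\ qualifiers (with respect to $\ell$, to $|\mu|$, and to $\nu$). All of these are consistent precisely because $\rho\ge1$ forces $\nu$ and $|\mu|$ to be mutually absolutely continuous; once this is observed, everything else is a routine substitution.
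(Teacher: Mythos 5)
Your proposal is correct and follows essentially the same route as the paper's proof, which reduces everything to the single integral identity $\int_B G(t,\alpha(t))\,\mu(dt)=\int_B \g(t,\alpha(t),\w(t))\,\nu(dt)$ obtained from the definition \eqref{def_integrals} of $\mu$-integrability, together with the mutual absolute continuity of $\nu$ and $|\mu|$ (the paper itself only sketches this, deferring to \cite[Lemma 6.1]{VP}). Your additional verifications --- that $\w(t)\in\tilde\K$ off a $|\mu|$-null set via $|\mu^j|\le|\mu|$, and that $\K$-valuedness of $\mu$ in part (ii) follows from closedness and convexity of the cone --- are exactly the details the paper omits, and they are carried out correctly.
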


\begin{proof} The proof follows the same lines as the proof of  \cite[Lemma 6.1]{VP}, so we illustrate only the key points, omitting  details.
   Concerning statement (i),  given a process $(\mu,\alpha,x)$ we consider the control triple $(\nu,\alpha,\w)$, where $\w$ and $\nu$ are defined as above. Note that $\big(1+{\sum}_{i=1}^n \big|{\sum}_{j=1}^m g_{ij}(t,\alpha(t)) \w^j(t) \big|\big)$  is $|\mu|$ integrable in view of the definition of integrability that we have adopted; consequently $\nu$ is well defined. From \eqref{def_integrals} it follows that the function $t\mapsto \g(t,\alpha(t),\w(t))$ is $\nu$-integrable and 
\bel{uguaglianza_integrale}
\int_B G(t,\alpha(t))\mu(dt) = \int_B \g(t,\alpha(t),\w(t)) \nu(dt) \qquad \forall B\in\BB.
\eeq
Since clearly $\nu\ll|\mu|$ (in fact, $\A_\mu\subseteq \A_\nu$), this implies that  $(\nu,\alpha,\w)\in \tilde\U$.
As a direct consequence of \eqref{uguaglianza_integrale} we deduce that $(\nu,\alpha,\w,x)$ is an auxiliary process. Furthermore, when $(\mu,\alpha,x)$ is a strict sense process, namely $\mu\ll\ell$, one also has $|\mu|\ll\ell$, so that   $\nu\ll |\mu|$ implies   $\nu\ll \ell$ and  $(\nu,\alpha,\w,x)$ is strict sense too. 

Conversely, take an auxiliary process $(\nu,\alpha,\w,x)$ and consider the control pair $(\mu,\alpha)$, in which   $\mu$ is defined by 
\eqref{mu}.  It is straightforward that $\mu\ll\nu$.  Moreover, the function $t\mapsto G(t,\alpha(t))$ is $\mu$-integrable and \eqref{uguaglianza_integrale} still holds. Therefore,    $(\mu,\alpha)\in\U$ and $(\mu,\alpha,x)$ is a process  for \eqref{imp_del_sys}. Finally, the  relation $\mu\ll \nu$ implies that  $(\mu,\alpha,x)$ is a strict sense process as soon as $(\nu,\alpha,\w,x)$ is. 
 \end{proof}

%
%
%
We will   also use the following three lemmas. 
\begin{lemma} \label{lemma_convergenza}
Consider some sequences  $(\gamma_i)_i\subset C^*(\R^n)$,  $(\xi_i)_i\subset\R^n$,    $(\varphi_i)_i\subset\M([0,T]\times(\R^n)^{N+1},\R^n)$,  and some $\gamma_0\in C^*(\R^n)$, $\xi_0\in\R^n$,  $\varphi_0\in\M([0,T]\times(\R^n)^{N+1},\R^n)$, and  $\eta\in L^\infty([-h,0],\R^n)$ satisfying  the following conditions {\rm (i)}--{\rm (iv)}:  
\begin{itemize} 
\item[{\rm (i)}]   For any $M>0$,  there exists $\tilde L_M\in L^1([0,T],\R_{\geq 0})$ such that, for each $i\ge 1$,
\bel{lipschitz_phi}
|\varphi_i(t,\{z_k\}) - \varphi_i(t,\{y_k\})| \leq \tilde L_M(t) |\{z_k\} - \{y_k\}|, 
\eeq
 for all $\{z_k\}$,\, $\{y_k\} \in M\,\B_{n(N+1)}$ and for   a.e. $t\in[0,T]$.
Moreover, one has
\bel{conv_phi}
\ell(\{ t\in[0,T]\text{ : } \varphi_i(t,\{z_k\}) = \varphi_0(t,\{z_k\}) \ \  \forall \{z_k\}\in (\R^n)^{N+1} \}) \to T.
\eeq
\item[\rm{(ii)}]Given a function $z_0\in BV([-h,T],\R^n)$ satisfying  
\[
\begin{cases}
z_0(t) = \xi_0 +\int_0^t \varphi_0(s, \{z_0(s-h_k)\})ds + \int_{[0,t]}\gamma_0(ds) \qquad \forall t\in ]0,T], \\
z_0(t) = \eta(t) \ \  \text{a.e. }  t\in[-h,0[, \ \ \ z_0(0)=\xi_0,
\end{cases}
\]
 there exists $\tilde c\in L^1([0,T],\R_{\geq 0})$ such that, for each $i\ge 1$, 
\bel{bound_phi}
|\varphi_i(t,\{ z_0(t-h_k) \})|\leq \tilde c(t) \qquad \text{a.e. }t\in[0,T].
\eeq

\item[{\rm (iii)}]  The sequence  $(\gamma_i)_i$  is uniformly bounded in total variation and there exists a Borel subset  $\E\subset [0,T]$ with $\ell(\E)=T$ such that
\[
\lim_i \int_{[0,t]}  \gamma_i(ds) = \int_{[0,t]} \gamma_0(ds) \qquad \forall t\in\E.
\]
\item[{\rm (iv)}] The sequence $\xi_i\to\xi_0$.
\end{itemize}
Then, for any integer $i$  large enough there exists   $z_i\in BV([-h,T],\R^n)$,   that satisfies
\bel{eq_z}
\begin{cases}
\ds z_i(t) =\xi_i+ \int_0^t \varphi_i(s, \{z_i(s-h_k)\})ds + \int_{[0,t]}\gamma_i(ds) \qquad \forall t\in ]0,T], \\
z_i(t) = \eta(t) \ \  \text{a.e. }t\in[-h,0[, \ \ \ z_i(0)=\xi_i.
\end{cases}
\eeq
Moreover, $dz_i \weak dz_0$ and $z_i(t)-\int_{[0,t]} \gamma_i(ds) \to z_0(t) - \int_{[0,t]} \gamma_0(ds)$ uniformly in $[0,T]$, so that
$z_i(t)\to z(t)$ for all $t\in\E$.
\end{lemma}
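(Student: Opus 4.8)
The plan is to eliminate the driving measure from \eqref{eq_z} by the change of variables $u_i:=z_i-\Gamma_i$, where $\Gamma_i(t):=\int_{[0,t]}\gamma_i(ds)$ for $t\in\,]0,T]$ and $\Gamma_i:=0$ on $[-h,0]$, and analogously $u_0,\Gamma_0$. Since the whole jump part of $z_i$ is carried by $\Gamma_i$, the function $u_i$ is absolutely continuous; writing $z_i(t-h_k)=u_i(t-h_k)+\Gamma_i(t-h_k)$ (which holds also on $[-h,0[$, where $u_i=\eta$ and $\Gamma_i=0$), equation \eqref{eq_z} becomes equivalent to the delayed ODE
\[
\dot u_i(t)=\varphi_i\big(t,\{u_i(t-h_k)+\Gamma_i(t-h_k)\}\big),\quad u_i=\eta\ \text{on}\ [-h,0[,\quad u_i(0)=\xi_i,
\]
with the \emph{same} history $\eta$ for every $i$ and for $u_0$. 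Note that $\sup_i\|\Gamma_i\|_{L^\infty}=:C_\Gamma<\infty$ by the uniform total variation bound in (iii). For existence I would argue exactly as in Prop.~\ref{glob_ex}: fix $M>\|z_0\|_{L^\infty([-h,T])}+\|u_0\|_{L^\infty}+C_\Gamma+1$, multiply $\varphi_i$ by a smooth cutoff $\theta$ equal to $1$ on $M\B$ and $0$ off $2M\B$, and observe that, thanks to \eqref{lipschitz_phi} and to the bound \eqref{bound_phi} along $z_0$, the truncated reduced field is globally Lipschitz in the state with an $i$-independent integrable constant $\tilde L\in L^1$ (as in \eqref{glob_int_lipschitz}) and dominated by an $i$-independent $L^1$ function. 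Hence \cite[Thm. 4.1]{VB} yields, for every $i$, a unique absolutely continuous solution $u_i$ of the truncated equation.

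The core is a delayed Gronwall estimate for $\delta_i:=|u_i-u_0|$. Subtracting the (truncated) equations and splitting the integrand as
\[
A_i(s):=\tilde\varphi_i(s,\{z_i(s-h_k)\})-\tilde\varphi_i(s,\{z_0(s-h_k)\}),\qquad B_i(s):=\varphi_i(s,\{z_0(s-h_k)\})-\varphi_0(s,\{z_0(s-h_k)\}),
\]
I would bound $|A_i(s)|\le\tilde L(s)\sum_k\big(\delta_i(s-h_k)+|\Gamma_i(s-h_k)-\Gamma_0(s-h_k)|\big)$ by the global Lipschitzness of the truncated field. Since $z_0$ lies in $M\B$ the cutoff is inactive along it, so $\tilde\varphi_i(s,\{z_0(s-h_k)\})=\varphi_i(s,\{z_0(s-h_k)\})$ and likewise for $\varphi_0$, which justifies the form of $B_i$; moreover $B_i$ vanishes off a set $N_i$ with $\ell(N_i)\to0$ by \eqref{conv_phi}, while on $N_i$ it is dominated by $\rho:=\tilde c+|\dot u_0|\in L^1$ thanks to \eqref{bound_phi} and to $\dot u_0=\varphi_0(\cdot,\{z_0(\cdot-h_k)\})$. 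Setting $D_i(t):=\sup_{[0,t]}\delta_i$ (with $\delta_i\equiv0$ on $[-h,0]$), these estimates give
\[
D_i(t)\le\omega_i+(N+1)\int_0^t\tilde L(s)\,D_i(s)\,ds,
\]
where $\omega_i:=|\xi_i-\xi_0|+\int_{N_i}\rho\,ds+\|\tilde L\|_{L^1}\,\sup_k\int_0^T|\Gamma_i(s-h_k)-\Gamma_0(s-h_k)|\,ds$. Here $\omega_i\to0$: the first term by (iv), the second by absolute continuity of the integral since $\ell(N_i)\to0$, and the third by dominated convergence, since $\Gamma_i\to\Gamma_0$ pointwise on the full-measure set $\E$ while the $\Gamma_i$ are uniformly bounded. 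Gronwall's lemma then yields $\|u_i-u_0\|_{L^\infty}=D_i(T)\le\omega_i\,e^{(N+1)\|\tilde L\|_{L^1}}\to0$, which is exactly the asserted uniform convergence of $z_i-\int_{[0,\cdot]}\gamma_i$ to $z_0-\int_{[0,\cdot]}\gamma_0$.

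It remains to collect the conclusions. Because $\|u_i-u_0\|_{L^\infty}\to0$ and $\|\Gamma_i\|_{L^\infty}\le C_\Gamma$, for $i$ large the delayed argument $\{z_i(t-h_k)\}$ stays in $2M\B$, so the cutoff is inactive along the whole trajectory and $u_i$ solves the untruncated equation; hence $z_i:=u_i+\Gamma_i$ solves \eqref{eq_z} for all large $i$, giving the existence claim. On $\E$ one has $\Gamma_i\to\Gamma_0$, so $z_i\to z_0$ pointwise on $\E$. Finally, the family $(z_i)$ has uniformly bounded total variation, since the derivatives $\dot u_i$ are dominated in $L^1$ and $|\gamma_i|([0,T])$ is bounded; together with the pointwise convergence on the dense full-measure set $\E$, this uniform variation bound forces $dz_i\weak dz_0$ via a standard compactness-and-identification argument (Helly's theorem combined with weak$^*$ sequential compactness in $C^*$). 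The delicate point, and the one I expect to require the most care, is the Gronwall step: the driving terms converge only in the pointwise/$L^1$ sense---the jumps of $\Gamma_i$ migrate and do not converge uniformly---and $\varphi_i$ coincides with $\varphi_0$ only up to a set of small but positive measure, so both the dominated-convergence control of the shifted $\Gamma_i-\Gamma_0$ and the absolute-continuity estimate of $\int_{N_i}\rho$ are what render the coefficient mismatch negligible.
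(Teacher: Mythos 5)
Your proof is correct and follows essentially the same route as the paper's: both subtract the primitive of $\gamma_i$ to reduce to a conventional delayed ODE, measure the defect of $z_0$ as an approximate solution of the $i$-th equation (splitting it into the $\varphi_i$-versus-$\varphi_0$ mismatch, handled via \eqref{conv_phi}, \eqref{bound_phi} and absolute continuity of the integral, and the $\Gamma_i$-versus-$\Gamma_0$ mismatch, handled by dominated convergence), and conclude with a Gronwall-type estimate --- the paper simply packages your truncation-plus-Gronwall step into a single invocation of the Filippov theorem for delayed systems \cite[Thm. 4.1]{VB}. Two cosmetic slips that do not affect the argument: the third term of your $\omega_i$ should be $\sum_k\int_0^T\tilde L(s)\,|\Gamma_i(s-h_k)-\Gamma_0(s-h_k)|\,ds$ (the product of the two $L^1$ norms you wrote does not dominate this integral since $\tilde L$ is only integrable, though your dominated-convergence justification applies verbatim to the corrected term), and the cutoff is inactive where the delayed argument stays in $M\B$, not $2M\B$.
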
   
The proof of  Lemma  \ref{lemma_convergenza},     extending to time delayed systems the results of  \cite[Prop. 5.1]{VP},      will be given in the Appendix.
 
\begin{lemma}  \label{lemma_misure}
Let $\gamma\in C^*(\R^k)$ and $(\gamma_i)_i\subset C^*(\R^k)$. Then, the following properties hold true. 
\begin{itemize}
\item[(i)] If $(\gamma_i)_i$ is a uniformly bounded sequence in total variation and there exists a Borel subset $\E\subset [0,T]$ with $\ell(\E)=T$, $T\in\E$, and such that
\[
\lim_i \int_{[0,t]} \gamma_i(ds) = \int_{[0,t]} \gamma(ds) \qquad \forall t\in\E,
\]
then $\gamma_i\weak\gamma$.

\item[(ii)]  If $(\gamma_i,|\gamma_i|)\weak(\gamma,\lambda)$ for some $\lambda\in C^\oplus$, then there exists  a Borel subset $\E\subset[0,T]$ containing $T$, such that $[0,T]\setminus\E$ is at most countable and 
\[
\lim_i \int_{[0,t]} \Psi(s)\cdot\gamma_{i}(ds) = \int_{[0,t]} \Psi(s)\cdot\gamma(ds) \qquad \forall t\in\E,
\]
for any integer $n\ge1$ and every continuous function $\Psi:[0,T]\to\R^{n\times k}$. 
\item[(iii)] If $\gamma_i\weak\gamma$, there exists  a subsequence $(\gamma_{i_l})_l$ of $(\gamma_i)_i$ such that  $(\gamma_{i_l},|\gamma_{i_l}|)\weak(\gamma,\lambda)$ for some $\lambda\in C^\oplus$. 
\end{itemize}
\end{lemma}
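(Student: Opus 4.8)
The plan is to prove the three items separately, choosing in each case the test functions that make the statement transparent: (i) reduces weak* convergence to the assumed pointwise convergence of the cumulative functions through a step-function approximation, (ii) rests on the fact that the finite measure $\lambda$ has only countably many atoms, and (iii) is a weak* compactness argument. For (i) I fix one scalar component (dropping the superscript) and a test function $\varphi\in C([0,T],\R)$, and set $z_i(t):=\int_{[0,t]}\gamma_i(ds)$, $z(t):=\int_{[0,t]}\gamma(ds)$, with the convention $z_i(0^-)=0$. Given $\eps>0$, uniform continuity yields $\delta>0$; since $\ell(\E)=T$ the set $\E$ is dense, so I may choose a partition $0=s_0<s_1<\dots<s_p=T$ of mesh $<\delta$ with $s_1,\dots,s_{p-1}\in\E$ (and $s_p=T\in\E$ by hypothesis). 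I approximate $\varphi$ by the step function equal to $\varphi(s_1)$ on $[0,s_1]$ and to $\varphi(s_r)$ on $]s_{r-1},s_r]$ for $r\ge2$; keeping the first subinterval \emph{closed at} $0$ absorbs the atom of $\gamma_i$ at the origin, which is why $0\in\E$ is never required. Integrating this step function against $\gamma_i$ produces a finite combination of $z_i(s_1)$ and the increments $z_i(s_r)-z_i(s_{r-1})$, each of which converges to the corresponding quantity for $z$ because all $s_r\in\E$, while the replacement error is at most $\eps$ times the total variation, bounded \emph{uniformly} in $i$ by the uniform total-variation hypothesis and by finiteness of $|\gamma|$. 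Letting $\eps\to0$ gives $\int\varphi\,d\gamma_i\to\int\varphi\,d\gamma$, i.e. $\gamma_i\weak\gamma$.

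For (ii) I take $\E:=\{t\in[0,T]:\lambda(\{t\})=0\}\cup\{T\}$; since the finite positive measure $\lambda$ has at most countably many atoms, $[0,T]\setminus\E$ is at most countable. The case $t=T$ is immediate, as $\Psi\chi_{[0,T]}=\Psi$ is continuous and $\gamma_i\weak\gamma$. For $t\in\E$ with $t<T$, I sandwich $\chi_{[0,t]}$ with a continuous cutoff $\theta_\eps$ equal to $1$ on $[0,t]$, to $0$ on $[t+\eps,T]$, and affine on $[t,t+\eps]$. Replacing $\chi_{[0,t]}$ by $\theta_\eps$ in $\int_{[0,T]}\Psi\,\chi_{[0,t]}\cdot\gamma_i(ds)$ costs at most $C\|\Psi\|_{L^\infty}\,|\gamma_i|(]t,t+\eps])$, which by the portmanteau upper bound on the closed set $[t,t+\eps]$ together with $|\gamma_i|\weak\lambda$ is asymptotically $\le C\|\Psi\|_{L^\infty}\lambda([t,t+\eps])\to\lambda(\{t\})=0$ as $\eps\to0$; the same estimate against $\gamma$ uses $|\gamma|(\{t\})=0$. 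Since $\Psi\theta_\eps$ is continuous, $\int\Psi\theta_\eps\cdot\gamma_i\to\int\Psi\theta_\eps\cdot\gamma$, and letting $\eps\to0$ yields the claim. The only auxiliary fact I record is $|\gamma|\le\lambda$, obtained by passing to the limit in $|\int\varphi\,d\gamma_i|\le\int\varphi\,d|\gamma_i|$ for nonnegative $\varphi\in C([0,T])$ (component by component), whence $|\gamma|(\{t\})\le\lambda(\{t\})=0$.

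For (iii), $\gamma_i\weak\gamma$ forces $\sup_i\|\gamma_i\|_{C^*(\R^k)}<\infty$ by the uniform boundedness principle in the dual of $C([0,T])$, so $(|\gamma_i|)_i$ is a bounded sequence of nonnegative measures; separability of $C([0,T])$ makes bounded balls of $C^*([0,T])$ sequentially weak* compact, hence some subsequence satisfies $|\gamma_{i_l}|\weak\lambda$ with $\lambda$ nonnegative, i.e. $\lambda\in C^\oplus$, and along it $\gamma_{i_l}\weak\gamma$ persists, giving $(\gamma_{i_l},|\gamma_{i_l}|)\weak(\gamma,\lambda)$. I expect the only genuine delicacy to lie in (i): getting the endpoint bookkeeping right so that the mass at $0$ is captured (forcing the first subinterval to be closed, and thereby dispensing with $0\in\E$) and checking that the step-function error is controlled uniformly in $i$, not merely for each fixed $i$. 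The estimates in (ii) and the compactness in (iii) are then routine.
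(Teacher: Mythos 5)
Your proof is correct, but it is more self-contained than the paper's, which handles each item by leaning on external results. For (i) the paper simply cites \cite[Prop.\,5.2(a)]{VP}, whereas you reprove it via a step-function approximation with nodes in $\E$; your bookkeeping is right, in particular keeping the first subinterval closed at $0$ so that an atom of $\gamma_i$ at the origin is absorbed into $z_i(s_1)$ (which is exactly why the hypothesis requires $T\in\E$ but not $0\in\E$), and the replacement error is controlled by $\eps\sup_i\|\gamma_i\|_{C^*}$, uniformly in $i$. For (ii) the paper makes the same choice $\E=\{t:\lambda(\{t\})=0\}\cup\{T\}$, reduces to scalar components, and then invokes \cite[Prop.\,1.62]{AFP} for bounded Borel integrands whose discontinuity set is $\lambda$-null; you instead prove that special case by hand with the continuous cutoff $\theta_\eps$ and the portmanteau upper bound on $[t,t+\eps]$, which is a perfectly valid elementary substitute. (Two cosmetic points: since $]t,t+\eps]$ does not contain $t$, the estimate on the $\gamma$ side needs no atom condition at all; and with the paper's convention $|\gamma|=\sum_j(\gamma^j_++\gamma^j_-)$ your auxiliary inequality should read $|\gamma|\le k\lambda$ rather than $|\gamma|\le\lambda$ --- harmless, since only $|\gamma|(\{t\})=0$ is used.) For (iii) the paper extracts weak$^*$ limits of the Jordan components $(\gamma^j_i)^{\pm}$ separately and sums them, while you apply Banach--Steinhaus plus sequential Banach--Alaoglu directly to the bounded sequence $(|\gamma_i|)_i$; the two arguments are essentially equivalent, yours being marginally more direct. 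What your route buys is independence from the cited references; what the paper's buys is brevity.
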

Statement (i)   coincides with \cite[Prop.\,5.2\,(a)]{VP}, while   (ii) and (iii) slightly extend the results of \cite[Prop.\,5.2\,(b)]{VP}.  We will then provide   a concise proof of (ii), (iii) in the Appendix. 

\vsm
To state the third lemma,  let us define the sets  $\A$, $\W$,  as
\[
\A:= \{\alpha\in\M([0,T],\R^q) \text{ : } \alpha(t)\in A(t) \ \text{a.e.} \}, \quad \W:= \M([0,T],\tilde\K). 
\]

\begin{lemma}\cite[Prop.\,5.3]{VP}  \label{lemma_dense}
Assume {\rm (H1), (H4)}, and let $(\nu,\alpha,\w)$ be an auxiliary control.  Then, there exist sequences $(m_i)_i\subset L^1([0,T],\R_{\geq 0})$ and $(\alpha_i,\w_i)_i\subset \A\times\W$ such that
$$
m_i(t)dt \weak \nu(dt), \quad \ell(\{t\in[0,T] \text{ : } (\alpha_i,\w_i)(t)\neq(\alpha,\w)(t)\})\to0.
$$
Moreover, $t\mapsto\g(t,\alpha_i(t),\w_i(t))  m_i(t)$  is Lebesgue integrable for any $i$ and 
$$
\g(t,\alpha_i(t),\w_i(t))  m_i(t)dt \weak\g(t,\alpha(t),\w(t))  \nu(dt).
$$
If  {\rm (H4)} is replaced with the stronger assumption  {\rm (H4)$^*$}, then the function $t\mapsto (\g,\h)(t,\alpha_i(t),\w_i(t))  m_i(t)$ is Lebesgue integrable for any $i$ and 
$$
(\g,\h)(t,\alpha_i(t),\w_i(t))  m_i(t)dt \weak (\g,\h)(t,\alpha(t),\w(t))  \nu(dt).
$$
\end{lemma}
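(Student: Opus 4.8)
The overall strategy is a simultaneous approximation result: I will approximate the scalar measure $\nu$ by absolutely continuous measures $m_i\,dt$, while perturbing the ordinary controls $(\alpha,\w)$ only on a set of vanishing Lebesgue measure carrying the singular part of $\nu$, and I will use the continuity hypothesis (H4) to force the associated velocity measures to converge. This is exactly the mechanism that the counterexamples preceding the lemma show to be necessary: smearing a concentrated mass against a merely measurable velocity field breaks convergence, so the controls must be re-selected near the singular support to make the effective velocity compatible with a continuous set-valued map.

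First, I would split $\nu$ through its Lebesgue decomposition $\nu=r\,\ell+\nu^s$, with $r\in L^1([0,T],\R_{\geq 0})$ and $\nu^s\perp\ell$, and fix a Borel set $S$ with $\ell(S)=0$ and $\nu^s([0,T]\setminus S)=0$. By outer regularity I choose open sets $O_i\supseteq S$ with $\ell(O_i)\to 0$, and I only ever modify the controls inside $O_i$. Outside $O_i$ I keep $(\alpha_i,\w_i):=(\alpha,\w)$ and let $m_i$ reproduce the density $r$; inside $O_i$ I spread the singular mass by a mesh-averaging (or mollification) construction, obtaining $m_i\in L^1([0,T],\R_{\geq 0})$ of uniformly bounded total mass with $m_i\,dt\weak\nu$, a convergence I would verify via the cumulative distribution functions at continuity points together with Lemma \ref{lemma_misure}(i). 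Since the controls are changed only on $O_i$, the agreement condition $\ell(\{(\alpha_i,\w_i)\neq(\alpha,\w)\})\leq\ell(O_i)\to 0$ holds automatically.

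Next comes the selection of $(\alpha_i,\w_i)$ on $O_i$. The target limit velocity measure $\g(\cdot,\alpha,\w)\nu^s$ is concentrated on $S$, and at each $\tau\in S$ the relevant value $v(\tau):=\g(\tau,\alpha(\tau),\w(\tau))$ lies in $\g(\tau,A(\tau),\tilde\K)\subseteq\overline{\rm co}\,\g(\tau,A(\tau),\tilde\K)$. Because (H4) makes $t\leadsto\overline{\rm co}\,\g(t,A(t),\tilde\K)$ uniformly bounded and Hausdorff-continuous, on each mesh cell of $O_i$ I can write $v$ as a convex combination of finitely many values of $\g(t,\cdot,\cdot)$ at nearby times and realize this combination on average by a chattering (Lyapunov-type) selection, using the $\Ll\times\BB^q$-measurability of $\mathrm{Gr}(A)$ from (H1) to keep $(\alpha_i,\w_i)$ measurable. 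The uniform bound on $\g$ from (H4) guarantees that $\g(\cdot,\alpha_i,\w_i)m_i$ stays uniformly bounded in total variation and equi-integrable. Combining the two steps, the primitives $t\mapsto\int_{[0,t]}\g(s,\alpha_i,\w_i)m_i\,ds$ converge, on a full-measure set $\E$ containing $T$, to $t\mapsto\int_{[0,t]}\g(s,\alpha,\w)\nu(ds)$; with the uniform total-variation bound, Lemma \ref{lemma_misure}(i) then promotes this to the claimed weak-$*$ convergence $\g(\cdot,\alpha_i,\w_i)m_i\,dt\weak\g(\cdot,\alpha,\w)\nu$. The concluding (H4)$^*$ statement follows by running the identical argument with the augmented velocity $(\g,\h)$ in place of $\g$, since (H4)$^*$ is exactly the uniform boundedness and Hausdorff-continuity of $t\leadsto\overline{\rm co}\,(\h,\g)(t,A(t),\tilde\K)$.

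I expect the genuine obstacle to be the velocity selection of the previous paragraph: the target value on the singular support is a point of the \emph{non-convexified} image $\g(\tau,A(\tau),\tilde\K)$, yet Hausdorff-continuity of the convex hulls does not transfer to the non-convexified images, so reproducing $v$ by a spread-out selection generically requires expressing it as a convex combination and chattering. This chattering must be carried out measurably in $t$, compatibly with the mesh that carries the concentrated mass, and the continuity of $\overline{\rm co}\,\g(t,A(t),\tilde\K)$ is precisely what bounds the resulting error uniformly as the mesh is refined; reconciling these constraints with the requirement that the controls be altered only on the Lebesgue-small set $O_i$ is the crux of the proof.
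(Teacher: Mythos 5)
The paper gives no proof of this lemma at all: it is imported verbatim from \cite[Prop.\,5.3]{VP}, so there is no in-paper argument to compare against. Your sketch --- Lebesgue decomposition of $\nu$, spreading the singular mass over a Lebesgue-small open neighbourhood of its support, and a chattering re-selection of $(\alpha_i,\w_i)$ there, justified by the Hausdorff continuity and uniform boundedness of $t\leadsto\overline{\rm co}\,\g(t,A(t),\tilde\K)$ together with measurable selection from ${\rm Gr}(A)$, followed by Lemma \ref{lemma_misure}(i) to upgrade convergence of primitives to weak$^*$ convergence --- is precisely the Vinter--Pereira construction and correctly isolates the control re-selection on the singular support as the crux, so it is consistent with the cited proof.
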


\begin{proof}[Proof of Prop. \ref{prop_density}]

Given an impulsive process $(\bar\mu,\bar\alpha,\bar x)$ as in the statement, let us consider the corresponding auxiliary process $(\bar\nu,\bar\alpha,\bar\w, \bar x)$,  as defined in Thm. \ref{Th_equivalenza}.  From  Lemma \ref{lemma_dense} applied to the auxiliary control $(\bar\nu,\bar\alpha,\bar\w)$,   it follows that there exists a sequence   $(\nu_i,\alpha_i,\w_i)_i\subset C^\oplus\times\A\times\W$ such that  $\nu_i(dt)=m_i(t)dt$ for some $m_i\in L^1([0,T],\R_{\geq0})$ for any  $i$,     
  $m_i(t)dt \weak \bar\nu(dt)$, $\ell(\{t\in[0,T] \text{ : } (\alpha_i,\w_i)(t)\neq(\bar\alpha,\bar\w)(t)\})\to0$, 
 $t\mapsto (\g,\h)(t,\alpha_i(t),\w_i(t))m_i(t)$ is Lebesgue integrable for any $i$,  and 
$(\g,\h)(t,\alpha_i(t),\w_i(t))m_i(t)dt \weak (\g,\h)(t,\bar\alpha(t),\bar\w(t))\bar\nu(dt).$ Notice that, for any $i$, $(\nu_i,\alpha_i,\w_i)$ is a strict sense  auxiliary control, since $\nu_i\ll\ell$.    Recalling that $\bar\mu(dt)=\h(t,\bar \alpha(t),\bar\w(t))\bar \nu(dt)$,  if we set, for any $i$,  
$$
\mu_i(dt):=\h(t,\alpha_i(t),\w_i(t))m_i(t)\,dt, \quad \gamma_i(dt):=\g(t,\alpha_i(t),\w_i(t))m_i(t)\,dt
$$
then $(\mu_i,\alpha_i)_i\subset \U$ and we have
$$
\mu_i\weak \bar\mu,\qquad \gamma_i\weak \gamma_0:=\g(t,\bar\alpha(t),\bar\w(t))\bar\nu(dt)=G(t,\bar\alpha(t))\cdot\bar\mu(dt). 
$$
From
Lemma \ref{lemma_misure},(ii)-(iii), with $\Psi$ identity matrix of $\R^{m\times m}$,  passing possibly  to a subsequence (we do not relabel),  it follows that there exists  a Borel subset $\E\subset[0,T]$ containing $T$,   such that $[0,T]\setminus\E$ is at most countable,  and 
\[
\lim_i \int_{[0,t]}\gamma_{i}(ds) = \int_{[0,t]}\gamma_0(ds) \qquad \forall t\in\E.
\]
Then, 
a straightforward application of Lemma \ref{lemma_convergenza} in which, in particular, $z_0 \equiv\bar x$ and $\varphi_i(t,\{z_k\}):=f(t,\{z_k\}, \alpha_i(t))$, so that (H3) implies $|\varphi_i(t,\{ \bar x(t-h_k) \})|\leq c(t)(1+ \|\bar x\|_{L^\infty})=: \tilde c(t)\in L^1([0,T],\R_{\geq0})$,
 leads us to deduce that, for any $i$ sufficiently large, there exists a strict sense trajectory $x_i$  associated with the control $(\mu_i,\alpha_i)$, for which  $x_i(0)= \bar x(0)$, $dx_i\weak d\bar x$, and $x_i(t)\to \bar x(t)$ on $\E$.  
\end{proof}

\begin{proof}[Proof of Thm. \ref{input_output}]
Let $(\bar\xi,\bar\mu,\bar\alpha)\in\R^n\times\U$, let $\bar x={\mathcal I}(\bar\xi,\bar\mu,\bar\alpha)$ (of course ${\mathcal I}$ is well defined in view of Prop. \ref{glob_ex}) and let $(\xi_i,\alpha_i,\mu_i)_i\subset\R^n\times\U$ satisfy the convergence conditions in \eqref{close_i}.  Since the function $t\mapsto \hat G(t)$ is continuous and  $(\mu_i, |\mu_i|)\weak(\bar \mu,\lambda)$, from Lemma \ref{lemma_misure}, (ii)   it follows that   there exists  a Borel subset $\E\subset[0,T]$ containing $T$ such that $[0,T]\setminus\E$ is at most countable and 
 \[
\lim_i \int_{[0,t]} \hat G(s)\cdot\mu_{i}(ds) = \int_{[0,t]}\hat G(s)\cdot\bar\mu(ds) \qquad \forall t\in\E.
\]
At this point, taking $\varphi_i(t,\{z_k\})= f(t,\{z_k\},\alpha_i(t))$, $\varphi_0(t,\{z_k\})=f(t,\{z_k\},\bar\alpha(t))$, $\gamma_i(dt)= \hat G(t)\mu_i(dt)$, $z_0\equiv \bar x$, and $\gamma_0(dt)=\hat G(t)\bar\mu(dt)$ (again, $|f(t,\{ \bar x(t-h_k) \},\alpha_i(t))|\leq c(t)(1+ \|\bar x\|_{L^\infty})\in L^1([0,T],\R_{\geq0})$), we can apply Lemma \ref{lemma_convergenza} and conclude that $dx_i\weak d\bar x$ and  $\lim_i x_i(t)=\bar x(t)$ for any $t\in\E$. 
\end{proof}

\begin{rem}
 Actually, the set $\E$ of Thm. \ref{input_output}  is exactly the set of continuity points of $\lambda$, weak limit of $(|\mu_i|)_i$. In particular, when the cone $\K$ coincides  the first orthant, $\E$ is   the set of continuity points of  $|\mu|$, as in this case $\lambda\equiv|\mu|$.
\end{rem}

\section{Necessary conditions of optimality}\label{S2} 
This section is devoted to introducing necessary optimality conditions for the following impulsive optimal control problem with time delays  (P):
\[
\text{Minimize} \ \ \ \Phi(x(0),x(T)) + \int_0^T l_0\left(t,\{x(t-h_k)\},\alpha(t)\right) dt + \int_{[0,T]} l_1(t, \alpha(t)) \mu(dt)
\]
over the set of control-trajectory triples $(\mu,\alpha,x)\in\U\times BV([-h,T],\R^n)$ satisfying 
\[
\begin{cases}
\ds x(t)=x(0)+\int_0^t f\left(s,\{x(s-h_k)\},\alpha(s)\right) ds + \int_{[0,t]}G(s, \alpha(s)) \mu(ds) \quad \text{$\forall t\in]0,T]$,} \\
x(t)=\zeta(t) \quad \text{a.e. }t\in [-h,0[,
\end{cases}
\]
satisfying the endpoint constraint
\[
  (x(0),x(T))\in\T,
\]     
and such that the function $t\mapsto l_1(t,\alpha(t))$ is $\mu$-integrable.

\vsm 

The functions $f$, $G$ and $\zeta$, as well as the control set $\U$, 
are as in the previous section.
The data now also comprise the cost functions $\Phi:\R^{2n}\to\R$, $l_0:[0,T]\times\R^{n(N+1)}\times\R^q\to\R$  and  $l_1:[0,T]\times\R^q\to \R^m$, and the target set   $\T\subset\R^{2n}$. 

\noindent Let us introduce the subset $\U_1\subset\U$ of controls, given by
\bel{U0}
\U_1:=\left\{(\mu,\alpha)\in\U: \ \ [0,T]\ni t \mapsto l_1(t,\alpha(t)) \ \ \text{is $\mu$-integrable}\right\}.
\eeq
We say that a triple $(\mu,\alpha,x)$ is a {\em feasible process} if it is an impulsive process as defined in Sec. \ref{S1},
  such that $(\mu,\alpha)\in\U_1$ and $(x(0),x(T))\in\T$. Thus, the optimization problem (P) can be reformulated as the minimization of the functional
\bel{J}
\J(\mu,\alpha,x) := \Phi(x(0),x(T)) + \int_0^T l_0\left(t,\{x(t-h_k)\},\alpha(t)\right) dt + \int_{[0,T]} l_1(t, \alpha(t)) \mu(dt)
\eeq
over feasible processes.  

\begin{definition} 
We say that a feasible process $(\bar\mu,\bar\alpha, \bar x)$ is {\em optimal} for (P) if
\[
\J(\bar\mu,\bar\alpha, \bar x) \leq \J(\mu,\alpha,x)
\]
for any feasible process $(\mu,\alpha,x)$.
\end{definition}
 
Given a feasible process $(\bar\mu,\bar\alpha,\bar x)$, which we will call the {\em reference process}, 
 in addition to hypotheses (H1)-(H4),   we shall invoke also the following assumptions.
\vsm

{\em  \begin{itemize} 

\item[{\bf (H5)}] The function $l_0$ satisfies hypotheses {\rm (H2)-(H3)} and the function $l_1$ satisfies assumption {\rm(H4)}, when they are inserted in place of $f$ and $G$, respectively.
\vsmm
\item[{\bf (H6)}]  $\Phi$ is Lipschitz continuous on a neighborhood of $(\bar x(0),\bar x(T))$ and $\T$ is a closed set. 
\end{itemize}
}
\noindent Furthermore,     for any $t\in \R$,  we define $\bar l_0[t]$ and $\bar f[t]$ as
\bel{barf}
\bar l_0[t]:= l_0(t,\{\bar x(t-h_k)\},\bar\alpha(t)), \qquad \bar f[t]:= f(t,\{\bar x(t-h_k)\},\bar\alpha(t)). 
\eeq 

Let us state the main result of this section. 
 \begin{theo}[Maximum Principle] \label{MP_Th}
Let $(\bar \mu,\bar \alpha,\bar x)$ be an optimal process for problem {\em (P)} and assume that the data satisfy {\rm (H1)}--{\rm (H4)} and {\rm (H5)}--{\rm (H6)}. Then, there   exist $\lambda \geq0$ and $p_k\in W^{1,1}([-h_k,T],\R^n)$, $k=0,\dots,N$,    such that
\bel{adj_k1}
\dot p_k(t)=0 \ \ \text{a.e.  $t\in[-h_k,0]$,} \quad p_k(t)=0 \ \ \forall t\in[(T-h_k)\vee0,T],
\eeq
for any $k=1,\dots,N$,
and satisfying conditions \eqref{nontriviality}--\eqref{maxham_Gg2} below:
\begin{gather}
\|p\|_{L^\infty} + \lambda \neq 0;\label{nontriviality} \\[1.2ex]
\begin{array}{l}
\big(-\dot p_0((t-h_0)\vee0),\dots, -\dot p_N((t-h_N)\vee0) \big) \\
\qquad\qquad\qquad\qquad \in {\rm co}\, \partial_{x_0,\dots,x_N} \big(p(t ) \cdot \bar f[t]- \lambda \bar l_0[t] \big) 
  \quad \text{a.e.  $t\in[0,T]$}; 
  \end{array} \label{adj_eqk_lagr} \\[1.2ex]
(p(0), -p(T)) \in N_{\T}(\bar x(0), \bar x(T)) +\lambda\,\partial \Phi(\bar x(0),\bar x(T));\label{trans_cond}  \\[1.2ex]
\begin{array}{l}
\ds\max_{a \in A(t)} \left\{ p(t)\cdot f(t,\{\bar x(t-h_k)\}, a) -\lambda l_0(t, \{\bar x(t-h_k) \}, a) \right\} \\ 
\ds\qquad\qquad\qquad\qquad\qquad\qquad\qquad =p(t) \cdot \bar f[t] -\lambda \bar l_0[t] \quad \text{a.e. $t\in[0,T]$;} 
\end{array}\label{maxham_f} \\[1.2ex]
\sup_{a \in A(t)} \left\{ \sigma_{_\K}(p(t) \cdot G(t, a)-\lambda l_1(t,a)) \right\} \leq 0 \ \ \ \forall t\in[0,T]; \label{maxham_Gg1} \\[1.2ex]
\sigma_{_\K}\big(p(t)\cdot G(t,\bar\alpha(t)) -\lambda l_1(t,\bar\alpha(t)) \big) = 0 \ \ \ \text{$\bar\mu$-a.e. $t\in[0,T]$,} \label{maxham_Gg2}
\end{gather}
where $p\in W^{1,1}([0,T],\R^n)$ is given by
\bel{def_p}
p(t) :=\sum_{k=0}^Np_k(t) \qquad \forall t\in[0,T].
\eeq
It follows from  \eqref{adj_eqk_lagr} and  \eqref{def_p}   that the adjoint arc $p$ satisfies
the following  ``advance” functional differential inclusion 
\bel{adj_eq_lagr}
-\dot p(t) \in \sum_{j=0}^N \,{\rm co}\,\tilde\partial_{x_j}\Big(p(t+h_j) \cdot \bar f[t+h_j] - \lambda \bar l_0[t+h_j] \Big)\, \chi_{[0,T]}(t+h_j) \ \ \ \text{a.e. $t\in[0,T]$}.
\eeq
(Here  $\tilde\partial_{x_j}$  denotes the projected limiting subdifferential onto the $j$-th delayed state coordinate).
Furthermore, when $\mu$ is scalar valued and $\K=\R_{\ge0}$, conditions \eqref{maxham_Gg1} and \eqref{maxham_Gg2} can be expressed as follows:
\begin{gather}
\sup_{a \in A(t)} \left\{ p(t) \cdot G(t, a)-\lambda l_1(t,a) \right\} \leq 0 \ \ \ \forall t\in[0,T], \label{maxham_Gg3} \\
p(t)\cdot G(t,\bar\alpha(t)) -\lambda l_1(t,\bar\alpha(t)) = 0 \ \ \ \text{$\bar\mu$-a.e. $t\in[0,T]$.} \label{maxham_Gg4}
\end{gather}
\end{theo}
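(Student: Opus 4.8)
The plan is to trade the impulsive problem (P) for a sequence of conventional (non-impulsive) delayed optimal control problems, to apply the nonsmooth Maximum Principle of \cite{VB} to each, and then to pass to the limit, following the scheme introduced in \cite{VP} for the delay-free case. First I would use the one-to-one correspondence of Prop. \ref{Th_equivalenza} to replace the reference optimal process $(\bar\mu,\bar\alpha,\bar x)$ with its auxiliary counterpart $(\bar\nu,\bar\alpha,\bar\w,\bar x)$, exchanging the irregular, unbounded fast term $G$ for the bounded, compactified dynamics $\g$ of \eqref{tilde_g}; augmenting the state with a coordinate that accumulates the running cost $l_0$ and the impulsive cost $l_1$, and noting by \eqref{mu} that $\bar\mu(dt)=\bar\w(t)\,\bar\nu(dt)/(1+\cdots)$ so that $\int l_1\cdot\bar\mu$ becomes an integral against $\bar\nu$ of a likewise compactified density, problem (P) is recast as a delayed optimal control problem driven by the scalar nonnegative measure $\bar\nu$ and the ordinary controls $(\bar\alpha,\bar\w)\in\A\times\W$.

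Next I would discretize the impulse. By Lemma \ref{lemma_dense} (equivalently the density assertion of Prop. \ref{prop_density}) there are strict-sense auxiliary controls $(\nu_i,\alpha_i,\w_i)$, $\nu_i=m_i\,dt$, with $m_i\,dt\weak\bar\nu$, $\ell(\{t:(\alpha_i,\w_i)(t)\neq(\bar\alpha,\bar\w)(t)\})\to0$, and corresponding strict-sense trajectories $x_i$ such that $dx_i\weak d\bar x$ and $x_i\to\bar x$ on a full-measure set containing $0$ and $T$. For each $i$ I would then pose a conventional delayed problem $(P_i)$ with the absolutely continuous dynamics driven by $m_i$, whose cost penalizes both the distance $d_\T(x(0),x(T))$ from the target (absorbing the endpoint constraint \eqref{endcon}) and, through an Ekeland-type term, the deviation from $(\alpha_i,\w_i)$. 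Comparing $\J$ along the approximating processes with the optimal value of (P) and using the convergences above, one checks that $(\alpha_i,\w_i)$ is an $\eps_i$-minimizer of $(P_i)$ with $\eps_i\to0$. Applying Ekeland's variational principle on $\A\times\W$ equipped with the metric $((\alpha,\w),(\alpha',\w'))\mapsto\ell(\{t:(\alpha,\w)(t)\neq(\alpha',\w')(t)\})$ then yields an exact minimizer $(\alpha^*_i,\w^*_i)$ of a perturbed functional, at distance $O(\sqrt{\eps_i})$ from $(\alpha_i,\w_i)$.

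To this genuinely optimal conventional delayed process I would apply the Maximum Principle of \cite{VB}. It produces a multiplier $\lambda_i\ge0$ and per-delay adjoint arcs $p^i_k\in W^{1,1}$, $k=0,\dots,N$, satisfying the boundary conditions \eqref{adj_k1}, an adjoint inclusion of the form \eqref{adj_eqk_lagr}, a transversality condition in terms of $N_\T$ and $\partial\Phi$, and a joint maximum condition over $(a,w)\in A(t)\times\tilde\K$ for the compactified Hamiltonian. I would normalize the multipliers so that $\|p^i\|_{L^\infty}+\lambda_i=1$, where $p^i:=\sum_{k}p^i_k$ as in \eqref{def_p}. The growth bounds in (H3) and (H5) bound $|\dot p^i_k(t)|$ by a fixed, $i$-independent $L^1$ function (since $\|p^i\|_{L^\infty}\le1$), so the arcs $p^i_k$ have equibounded norms and equi-integrable derivatives; by Arzel\`a--Ascoli and weak $L^1$ compactness I would extract uniformly convergent $p^i_k\to p_k$ with $\dot p^i_k$ converging weakly in $L^1$, and $\lambda_i\to\lambda$. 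Uniform convergence preserves the normalization, giving \eqref{nontriviality}, while upper semicontinuity of the limiting subdifferential and of $N_\T$ transmits \eqref{adj_eqk_lagr} and \eqref{trans_cond} to the limit; summation over $k$ yields the advance inclusion \eqref{adj_eq_lagr}, and the Ekeland slack $O(\sqrt{\eps_i})\to0$ delivers the running maximum condition \eqref{maxham_f}.

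The main obstacle is to recover the genuinely impulsive conditions \eqref{maxham_Gg1}--\eqref{maxham_Gg2} on the fast dynamics. The conventional maximum condition for $(P_i)$ couples the Lebesgue part (governed by $f,l_0$) with the $m_i\,dt$ part (governed by $\g$ and the compactified $l_1$); as $m_i\,dt\weak\bar\nu$ acquires a singular part, this must be shown to decouple in the limit into the running condition \eqref{maxham_f}, holding a.e.\ w.r.t.\ Lebesgue, and an impulsive condition holding $\bar\mu$-a.e. Concretely, the $\nu$-part supplies $\max_{w\in\tilde\K}\{p^i(t)\cdot\g-\lambda_i(\text{compactified }l_1)\}\le0$; since both $\g$ and the compactified $l_1$ are the respective quantities $G$ and $l_1$ applied to $w$ over one and the same strictly positive denominator from \eqref{tilde_g}, this maximand equals $(p^i(t)\cdot G-\lambda_i l_1)\cdot w$ divided by that denominator, whence positivity of the denominator and the conic structure of $\K$ let one un-normalize, let $w$ range over all of $\K$, and pass to the limit to get $\sigma_\K(p(t)\cdot G(t,a)-\lambda l_1(t,a))\le0$. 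Upgrading this from a.e.\ to \emph{every} $t$ relies on the continuity furnished by (H4) and (H5), and the complementary slackness \eqref{maxham_Gg2} follows at $\bar\mu$-almost every point via the change of variables \eqref{mu}. Controlling the unbounded denominator uniformly in $i$ and securing the decoupling are the crux; the scalar case \eqref{maxham_Gg3}--\eqref{maxham_Gg4} is then immediate, since $\sigma_{\R_{\ge0}}(s)\le0$ means $s\le0$.
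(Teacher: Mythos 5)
Your overall architecture differs from the paper's: the paper first proves Thm.~\ref{Boundary_Th} (necessary conditions for $(\Psi,\C)$-boundary processes, with penalty term $|\xi_i-\Psi(x(T))|$ for points $\xi_i\notin\RR_\Psi^\C$) and then obtains Thm.~\ref{MP_Th} by a state-augmentation trick ($y_2$ accumulating the cost, constant states $y_3,y_4,y_5$, and the set $\C=\{(z_1,\dots,z_5):(z_1,z_3)\in\T,\ z_2=0,\ z_5\ge\Phi(z_1,z_3)+z_4\}$), whereas you attack (P) directly with a $d_\T$-type penalization. That merger is in principle a legitimate alternative, and your use of Prop.~\ref{Th_equivalenza}, Lemma~\ref{lemma_dense}, Ekeland, \cite{VB} and a limit passage matches the paper's machinery. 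However, there is a genuine gap in the execution.

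The gap is that you freeze the intensity $m_i$ of the absolutely continuous surrogate measure and apply Ekeland's principle only on $\A\times\W$ with the metric $\ell(\{t:(\alpha,\w)(t)\neq(\alpha',\w')(t)\})$. With $m_i$ fixed, the maximum condition delivered by \cite[Thm. 2.1]{VB} only compares $p_i(t)\cdot\big[f+\g(t,\alpha_i^*,\w_i^*)m_i(t)\big]$ against $p_i(t)\cdot\big[f+\g(t,a,w)m_i(t)\big]$ for other $(a,w)$; it contains no variation of the impulse intensity, and therefore cannot produce the sign condition $\sup_{a,w}\{p(t)\cdot\g(t,a,w)\}\le0$ (i.e.\ \eqref{maxham_Gg1}), which you simply assert is ``supplied'' by the $\nu$-part, nor the complementary slackness \eqref{maxham_Gg2}. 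In the paper these come precisely from treating $m$ as a control with $m(t)\in[0,C_i(t)]$, $C_i(t)=i+\max_{j\le i}\tilde m_j(t)\to+\infty$, including $\|m-\tilde m_i\|_{L^1}$ (and $|x_0-\tilde x_0|$) in the Ekeland metric on $\Gamma_i$: taking $m=C_i(t)$ in \eqref{maxham_ek} and dividing by $C_i(t)$ yields \eqref{maxham_G1} on a dense set (then everywhere by (H4)), while taking $m=0$ yields $p_i\cdot\g\ge-\eps_i$ on $\{m_i>0\}$, which combined with the weak$^*$ convergence $p_i\cdot\g\,m_i\,dt\weak p\cdot\g\,\bar\nu$ and Lemma~\ref{lemma_misure} gives \eqref{maxham_G2}. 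Without this enlargement of the control space your scheme proves at best the drift conditions \eqref{adj_eqk_lagr}--\eqref{maxham_f} and says nothing about the measure $\bar\mu$. A secondary imprecision: an additive penalty $d_\T(x(0),x(T))$ requires an exact-penalization (calmness) hypothesis you do not have; the constraint must instead be absorbed through a nonnegative functional of the form $\big|\big((\J-\bar\J+\eps_i^2)_+,\,d_\T(x(0),x(T))\big)\big|$ (or, as in the paper, through the boundary-process formulation) so that the reference process is an $\eps_i^2$-minimizer of a free-endpoint problem.
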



\begin{rem}\label{Rem_pmp}  
Assumptions   (H1)--(H3)  are slightly different from  those used in the maximum principle with time delays in \cite[Thm. 2.1]{VB}, of which Thm. \ref{MP_Th} can be seen as an extension to our impulsive problem. In particular, condition (H2) is  stronger  than the usual local Lipschitz continuity condition in the state variable, while  (H3) is usually replaced by an integrably boundedness assumption in some $\varepsilon$-neighborhood of the reference trajectory $\{\bar x(t-h_k)\}$ in the $L^\infty$-norm.   Actually, the proof of Thm. \ref{MP_Th}  relies on  Ekeland's variational principle, but, because of the   presence of impulses, our approximating trajectories  converge  in general   to the reference trajectory only almost everywhere, not  in the $L^\infty$-norm (see Sec. \ref{MP_proof} below). Hypothesis (H3) is then used to guarantee the existence of solutions to \eqref{imp_del_sys} laying in some compact subset of $\R^n$ (as well as to ensure the continuity of the input-output map \eqref{input-output_map}). Anyway, as it is easy to see, with regard to  the drift term $f$, (H1)--(H3) imply the assumptions of  \cite[Thm. 2.1]{VB}.
%
\end{rem}

\begin{rem} In a nonsmooth setting,  condition \eqref{adj_eqk_lagr} 
is a more precise condition than \eqref{adj_eq_lagr}, as the subdifferential $\partial_{\{x_k\}}  (p\cdot f)$ is a subset, and in some cases a strict subset, of the product of projected partial subdifferentials $\tilde\partial_{x_0}(p\cdot f)\times\dots\times \tilde\partial_{x_N}(p\cdot f)$.\end{rem}

\begin{rem}
Conditions \eqref{maxham_Gg1}-\eqref{maxham_Gg2} locate the support of $\bar \mu$. This might appear to provide rather sparse information, but the fact that the optimal trajectory must satisfy the terminal constraints  and the conditions on the conventional control, embodies additional implicit information about $\bar\mu$. In fact, at least in the case without delays,  it is a simple matter to show that for linear convex problems, under a normality hypothesis, the conditions of Thm. \ref{MP_Th} are also sufficient for optimality of $(\bar x, \bar \alpha, \bar\mu)$ (see e.g. \cite{VP}). This shows the strength of the necessary conditions.
\end{rem}
In order to prove the Maximum Principle, we introduce the following notion. 

\begin{definition} 
Given a function $\Psi:\R^n\to\R^l$ and a subset $\C\subset \R^n$, we say that an impulsive process $(\bar\mu,\bar\alpha,\bar x)$ of  \eqref{imp_del_sys}  is a {\em $(\Psi,\C)$-boundary process} if $\bar x(0)\in \C$ and $\Psi(\bar x(T))$ belongs to the boundary of the {\em $(\Psi,\C)$-reachable set $\RR_\Psi^\C$ associated with \eqref{imp_del_sys}},  given by
\[
\RR_\Psi^\C := \{ \Psi(x(T)) \text{ : } (\mu,\alpha,x) \text{ is a process for \eqref{imp_del_sys} with } x(0)\in\C \}. 
\]
\end{definition}

We will first establish necessary conditions for $(\Psi,\C)$-boundary processes. These conditions are of interest in their own right. But they also can be used simply to derive the necessary conditions of   Thm. \ref{MP_Th}. 

\begin{theo}[Necessary conditions for boundary processes] \label{Boundary_Th}
Let $(\bar\mu,\bar\alpha,\bar x)$ be a $(\Psi,\C)$-boundary process for some $\Psi:\R^n\to\R^l$, which is Lipschitz continuous on a neighborhood of $\bar x(T)$, and for some closed subset $\C\subset\R^n$.  Let the data satisfy hypotheses  {\rm (H1)}--{\rm (H4)}. Then,  there exist $\eta\in\R^l\setminus\{0\}$ and $p_k\in W^{1,1}([-h_k,T],\R^n)$, $k=0,\dots,N$,  such that \eqref{adj_k1} is satisfied for any $k=1,\dots,N$
and conditions \eqref{adj_eqk}--\eqref{maxham_G2} below hold: 
\begin{gather}
\begin{array}{l}
\big(-\dot p_0((t-h_0)\vee0),\dots, -\dot p_N((t-h_N)\vee0) \big) \\
\qquad\qquad\qquad\qquad\qquad\in \,{\rm co}\, \partial_{x_0,\dots,x_N} \big(p(t ) \cdot \bar f[t] \big)  \quad \text{a.e. $t\in[0,T]$}; 
\end{array}\label{adj_eqk}\\[1.2ex]
p(0) \in N_{\C}(\bar x(0)); \label{trans1} \\[1.2ex]
-p(T)\in\eta \cdot \partial \Psi(\bar x(1));\label{trans2}\\[1.2ex]
p(t) \cdot \bar f[t] = \max_{a \in A(t)} \left\{ p(t)\cdot f(t,\{\bar x(t-h_k)\}, a)\right\} \ \ \ \text{a.e. $t\in[0,T]$}; \label{ham_f}
\end{gather}
\begin{gather}
\sup_{a \in A(t)} \left\{ \sigma_{_\K}(p(t) \cdot G(t, a)) \right\} \leq 0 \ \ \ \forall t\in[0,T];\label{maxham_G1} \\[1.2ex]
\sigma_{_\K}\big(p(t)\cdot G(t,\bar\alpha(t))\big) = 0 \ \ \ \text{$\bar\mu$-a.e. $t\in[0,T]$,} \label{maxham_G2}
\end{gather}
where $p$ is as in \eqref{def_p}.
It follows from \eqref{def_p}  and \eqref{adj_eqk} that $p$ satisfies
the following  ``advance” functional differential inclusion 
\bel{adj_eq}
-\dot p(t) \in \sum_{j=0}^N \,{\rm co}\,\tilde\partial_{x_j} \Big(p(t+h_j) \cdot \bar f[t+h_j] \Big)\, \chi_{[0,T]}(t+h_j) \quad \text{a.e. $t\in[0,T]$.}
\eeq
Furthermore, when $\mu$ is scalar valued and $\K=\R_{\ge0}$, conditions \eqref{maxham_G1} and \eqref{maxham_G2} can be strengthened as follows:
\begin{gather}
\sup_{a \in A(t)} \left\{ p(t) \cdot G(t, a) \right\} \leq 0 \ \ \ \forall t\in[0,T],\label{maxham_G3} \\
p(t)\cdot G(t,\bar\alpha(t)) = 0 \ \ \ \text{$\bar\mu$-a.e. $t\in[0,T]$.} \label{maxham_G4}
\end{gather}
\end{theo}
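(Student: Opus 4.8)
The plan is to reduce problem \eqref{imp_del_sys} to the auxiliary impulsive system via Prop.~\ref{Th_equivalenza}, exploit the boundary property to generate a family of nearby external target points, and then transfer the whole analysis to a sequence of \emph{conventional} (non-impulsive) delayed optimal control problems, to which the nonsmooth Maximum Principle of \cite[Thm.~2.1]{VB} applies. Concretely, let $(\bar\nu,\bar\alpha,\bar\w,\bar x)$ be the auxiliary process associated with $(\bar\mu,\bar\alpha,\bar x)$. Since the correspondence in Prop.~\ref{Th_equivalenza} is a trajectory-preserving bijection, $\RR_\Psi^\C$ equals the reachable set of the auxiliary system, so $\Psi(\bar x(T))\in\partial\RR_\Psi^\C$. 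Hence there is a sequence $c_i\to\Psi(\bar x(T))$ with $c_i\notin\RR_\Psi^\C$; setting $\eps_i:=|\Psi(\bar x(T))-c_i|\to0^+$, the reference process is an $\eps_i$-minimizer of the endpoint functional
\[
W_i(x):=\Big[\, d_\C(x(0))^2 + |\Psi(x(T))-c_i|^2 \,\Big]^{1/2}
\]
over all processes, because $W_i\ge0$ and $W_i(\bar x)=\eps_i$. By the density result Prop.~\ref{prop_density} together with Lemma~\ref{lemma_dense}, the infimum of $W_i$ over \emph{strict sense} ($\nu\ll\ell$) auxiliary processes coincides with its infimum over all processes, and $\bar x$ is realized as a limit of strict sense processes whose $W_i$-value is arbitrarily close to $\eps_i$.

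Next I would apply Ekeland's variational principle on the complete metric space of strict sense auxiliary controls $(m,\alpha,\w)$, with $\nu=m\,dt$, metrized by $\ell\big(\{t:(\alpha,\w)(t)\neq(\alpha',\w')(t)\}\big)$ together with an $L^1$-type distance between the densities $m$. For each $i$ this yields an exact minimizer $(\hat\nu_i,\hat\alpha_i,\hat\w_i,\hat x_i)$ of a perturbed functional of the form $W_i+\sqrt{\eps_i}\cdot(\text{Ekeland distance to the } i\text{-th approximating control})$. Each such minimizer solves a genuine conventional delayed optimal control problem with Lipschitz data and fast dynamics $\g(t,a,w)$, so \cite[Thm.~2.1]{VB} provides a cost multiplier $\lambda_i\ge0$, adjoint arcs $p_k^i\in W^{1,1}$ satisfying \eqref{adj_k1}, and an endpoint vector $\eta_i\in\partial|\Psi(\hat x_i(T))-c_i|$, obeying the conventional adjoint inclusion, transversality conditions expressing $p^i(0)$ through a subgradient of the $\C$-penalty and $-p^i(T)\in\eta_i\cdot\partial\Psi(\hat x_i(T))$, and a maximum condition in which the fast dynamics enters through $p^i(t)\cdot\g(t,a,w)$ maximized over $a\in A(t)$ and $w\in\tilde\K$.

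Finally I would pass to the limit $i\to\infty$. After normalization, uniform $W^{1,1}$ bounds on the $p_k^i$ follow from (H2)--(H3) and the advance structure of the adjoint inclusion, allowing extraction of subsequences with $p_k^i\to p_k$ uniformly and $\dot p_k^i\rightharpoonup\dot p_k$ weakly in $L^1$. Since Prop.~\ref{prop_density} and Thm.~\ref{input_output} give $\hat x_i(t)\to\bar x(t)$ on a full-measure set containing $0$ and $T$ and $\hat\nu_i\weak\bar\nu$, upper semicontinuity of the limiting subdifferential closes \eqref{adj_eqk}, \eqref{trans1}, \eqref{trans2} and the $f$-maximum condition \eqref{ham_f}. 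For the $G$-conditions I would pass to the limit in the $\g$-maximum condition and use, for fixed $t$ and $a$, the equivalence $\sup_{w\in\tilde\K} p(t)\cdot\g(t,a,w)\le0 \iff \sigma_\K\big(p(t)\cdot G(t,a)\big)\le0$, which follows from the definition \eqref{tilde_g} of $\g$ and from $\sigma_\K(y)=\sup_{w\in\K} y\cdot w\in\{0,+\infty\}$; this yields \eqref{maxham_G1}--\eqref{maxham_G2}, and the scalar case \eqref{maxham_G3}--\eqref{maxham_G4} follows since $\sigma_{\R_{\ge0}}(s)=0$ precisely when $s\le0$. Nontriviality $\eta\neq0$ is inherited because each $\eta_i$ is a unit vector.

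The hard part will be the limit passage under merely almost-everywhere convergence of the approximating trajectories: the presence of impulses prevents convergence in the $L^\infty$-norm, so the usual closure arguments for the adjoint inclusion do not apply verbatim. This is exactly where (H3) is indispensable, as it confines all trajectories to a fixed compact set and supplies the integrable bounds needed for equi-integrability of $\dot p_k^i$ and for the upper-semicontinuous closure of the subdifferential inclusion along an a.e.-convergent sequence of states. A second delicate point is guaranteeing that the normalized multipliers $(\lambda_i,\eta_i)$ do not degenerate in the limit; here the boundary hypothesis $c_i\notin\RR_\Psi^\C$ is what keeps $\eta_i$ a genuine unit vector and hence forces $\eta\neq0$.
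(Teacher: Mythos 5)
Your overall strategy coincides with the paper's: pass to the auxiliary system via Prop.~\ref{Th_equivalenza}, pick external points $\xi_i\to\Psi(\bar x(T))$, approximate by strict sense processes, apply Ekeland's principle and the delayed Maximum Principle of \cite[Thm.~2.1]{VB}, and pass to the limit. However, two of your choices create genuine gaps. First, your penalty functional $W_i$ contains no state-tracking term. In the paper the cost of the approximating problems is $\int_0^T|x-\bar x|^2dt+|x(T)-\bar x(T)|^2+|\xi_i-\Psi(x(T))|$, and it is precisely the vanishing of this cost along the Ekeland minimizers that yields $x_i\to\bar x$ in $L^2$ and $x_i(T)\to\bar x(T)$, hence $dx_i\weak d\bar x$ and, by subtracting the drift, the weak-$*$ convergence of $\g(\cdot,\alpha_i,\w_i)m_i\,dt$ to $\g\,\bar\nu(dt)$ needed for \eqref{maxham_G2}. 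You propose instead to recover trajectory convergence from the Ekeland control estimate via Thm.~\ref{input_output}, but that theorem is not available here: it requires $G$ independent of $a$ and continuous, together with the joint convergence $(\mu_i,|\mu_i|)\weak(\bar\mu,\lambda)$, none of which you have. Nor can you deduce $\g(\cdot,\hat\alpha_i,\hat\w_i)\hat m_i\,dt\weak\g\,\bar\nu(dt)$ from $\|\hat m_i-\tilde m_i\|_{L^1}\to0$ and $\ell(E_i)\to0$ with $E_i:=\{t:(\hat\alpha_i,\hat\w_i)(t)\ne(\tilde\alpha_i,\tilde\w_i)(t)\}$: the densities $\tilde m_i$ concentrate so as to form the atoms of $\bar\nu$, hence $\int_{E_i}\tilde m_i\,dt$ need not vanish even though $\ell(E_i)\to0$. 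Without this convergence both the a.e.\ trajectory convergence and the support condition \eqref{maxham_G2} are out of reach.

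Second, you apply \cite[Thm.~2.1]{VB} to a problem in which the scalar control $m$ ranges over all of $\R_{\ge0}$; the hypotheses of that theorem (integrable boundedness of the dynamics over the control set) then fail. The paper introduces the truncation $m(t)\in[0,C_i(t)]$ with $C_i(t):=i+\max_{j\le i}\tilde m_j(t)$, which makes each approximating problem a legitimate conventional delayed problem and, crucially, is what permits taking $m=C_j(t)$ in the Weierstrass condition and dividing by $C_j(t)\to\infty$ to obtain $\sup_{a\in A(t),\,w\in\tilde\K}p(t)\cdot\g(t,a,w)\le0$, i.e.\ \eqref{maxham_G1}; your proposal omits this device entirely. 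A smaller but real issue: with your combined functional $\big[d_\C^2+|\Psi-c_i|^2\big]^{1/2}$ the weight multiplying $\partial\Psi(\hat x_i(T))$ is $(\Psi(\hat x_i(T))-c_i)/W_i(\hat x_i)$, which is not a unit vector and may tend to zero, so nontriviality of $\eta$ is not secured; the paper keeps $x(0)\in\C$ as a hard constraint and penalizes only $|\xi_i-\Psi(x(T))|$, so that $\eta_i\in\partial\B_l$ exactly and $\eta\ne0$ survives the limit.
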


The proof of Thm. \ref{Boundary_Th} will be given in the next section.  We now show that, as anticipated,  Thm. \ref{MP_Th} can be deduced  as a corollary of Thm. \ref{Boundary_Th}.

\begin{proof}[Proof of Thm. \ref{MP_Th}]
Let $(\bar\mu,\bar\alpha, \bar x)$ be an optimal process for problem (P), as in the statement.  We introduce  a new control system, where we consider as  processes the 7-uples $(\mu,\alpha,y_1,\dots,y_5)$, in which $(\mu,\alpha)\in\U_1$, $y_1\in   BV([-h,T], \R^n)$, $y_2\in BV([0,T], \R)$, $(y_3,y_4,y_5)\in W^{1,1}([0,T],\R^n\times\R\times\R)$, and  
that satisfy 
\bel{tildeP}
\begin{cases}
\ds y_1(t) = y_1(0)+ \int_0^t f(s, \{ y_1(s-h_k)\}, \alpha(s))ds + \int_{[0,t]}G(s,\alpha(s)) \mu(ds) \quad \forall t\in]0,T], \\
\ds y_2(t) = y_2(0)+\int_0^t l_0(s, \{ y_1(s-h_k)\}, \alpha(s)) ds + \int_{[0,t]} l_1(s,\alpha(s)) \mu(ds) \quad \forall t\in]0,T], \\ 
\dot y_3(t) =0, \quad \dot y_4(t)=0, \quad \dot y_5(t)=0, \qquad \text{a.e. $t\in[0,T]$,}  \\
y_1(t)=\zeta(t) \qquad \text{a.e. $t\in[-h,0[$.}
\end{cases}
\eeq
Define the Lipschitz function $\Psi:\R^n\times\R\times\R^n\times\R\times\R\to\R^n\times\R\times\R$, by
\[
\Psi(z_1,z_2,z_3,z_4,z_5)=(z_1-z_3, z_2-z_4,z_5),
\]
the closed set $\C\subset \R^n\times\R\times\R^n\times\R\times\R$, by
\[
\C:=\{(z_1,z_2,z_3,z_4,z_5)  \text{ : } (z_1,z_3)\in\T, \ z_2=0, \ z_5\geq \Phi(z_1,z_3)+z_4 \},
\]
and let $\tilde\RR_{\Psi}^{\C}$ denote the $(\Psi,\C)$-reachable set associated with this new (impulsive) control system. Observe that the process $(\tilde\mu,\tilde\alpha,\tilde y_1,\dots,\tilde y_5)$, in which $(\tilde\mu,\tilde\alpha,\tilde y_1):=(\bar\mu,\bar\alpha, \bar x)$, and, for every $t\in[0,T]$,
\[
\begin{split}
&\tilde y_2(t):=\int_0^t l_0(s, \{ \bar x(s-h_k)\},  \bar\alpha(s)) ds + \int_{[0,t]}l_1(s,\bar\alpha(s))\bar\mu(ds), \\
&\tilde y_3(t)\equiv \bar x(T), \\
&\tilde y_4(t)\equiv\int_0^T l_0(t, \{ \bar x(t-h_k)\}, \bar\alpha(t)) dt + \int_{[0,T]}l_1(t,\bar\alpha(t))\bar\mu(dt) \ (=\tilde y_2(T)), \\
&\tilde y_5(t)\equiv\J(\bar\mu,\bar\alpha,\bar x). 
\end{split}
\]
satisfies \eqref{tildeP} with $(\tilde y_1,\tilde y_2,\tilde y_3,\tilde y_4,\tilde y_5)(0)\in\C$. Moreover,  $\Psi((\tilde y_1,\tilde y_2,\tilde y_3,\tilde y_4,\tilde y_5)(T))\in\partial \tilde\RR_{\Psi}^{\C}$, otherwise $(0,0, \J(\bar\mu,\bar\alpha,\bar x)-\rho)\in\tilde\RR_{\Psi}^{\C}$ for some $\rho>0$ sufficiently small, but this clearly contradicts the optimality of $(\bar\mu,\bar\alpha,\bar x)$.
 Thus, by Thm. \ref{Boundary_Th} there exist  
$\mathbf{p}_k:=(p_{1_k},p_{2_k},p_{3_k},p_{4_k},p_{5_k})\in W^{1,1}([-h_k,T],\R^{n+1+n+1+1})$ for $k=0,\dots,N$,  and $(\eta_1,\eta_2,\eta_3)\in \R^{n+1+1}\setminus \{0\}$ such that  
 \eqref{adj_k1} holds  with $\mathbf{p}_k$ replacing  $p_k$   for any  $k=1,\dots,N$,
 and such that  conditions (i)--(vii) below are met for some $\lambda\geq0$: 
 \[
\begin{split}
{\rm (i)} \ &\begin{array}{l}
(-\dot p_{1_0}(t), \dots, -\dot p_{1_N}((t-h_N)\vee 0)) \\
\qquad\qquad\qquad \in \,{\rm co}\,\partial_{x_0,\dots,x_N}\big\{p_1(t) \cdot \bar f[t] +p_2(t)\bar l_0[t] \big\}  \quad \text{a.e. $t\in[0,T]$};
\end{array}  \\[1.1ex]
{\rm (ii)}\  &\dot p_2(t)=0, \ \ \dot p_3(t)=0, \ \ \dot p_4(t)=0, \ \ \dot p_5(t)=0, \qquad \text{a.e. $t\in[0,T]$};  \\[1.1ex]
{\rm (iii)} \ &( p_1, p_3)(0)\in N_\T(\bar x(0),\bar x(T)) + \lambda \partial \Phi(\bar x(0),\bar x(T)), \ \  p_4(0)=\lambda, \ \ p_5(0)=-\lambda;\\[1.1ex]
{\rm (iv)} \ &(p_1,p_2,p_3,p_4,p_5)(T)\in -(\eta_1,\eta_2,\eta_3)\cdot\partial\Psi=(-\eta_1, -\eta_2, \eta_1,\eta_2,-\eta_3);\\[1.1ex]
{\rm (v)} \ &\begin{array}{l}
 \max_{a \in A(t)} \left\{ p_1(t)\cdot f(t,\{\bar x(t-h_k)\}, a) +p_2(t) l_0(t, \{\bar x(t-h_k) \}, a) \right\} \\
\qquad\qquad\qquad\qquad\qquad\qquad\qquad = p_1(t) \cdot\bar f[t] +p_2(t)\bar l_0[t] \quad \text{a.e. $t\in[0,T]$;} \end{array}\\[1.1ex]
{\rm (vi)} \ &\sup_{a \in A(t)} \left\{ \sigma_{_\K}(p_1(t) \cdot G(t, a)+p_2(t) l_1(t,a)) \right\} \leq 0 \ \ \ \forall t\in[0,T];  \\[1.1ex]
{\rm (vii)} \ &\sigma_{_\K}\big(p_1(t)\cdot G(t,\bar\alpha(t)) +p_2(t) l_1(t,\bar\alpha(t)) \big) = 0 \ \ \ \text{$\bar\mu$-a.e. $t\in[0,T]$},  
\end{split}
\]
where $\bar l_0$ and $\bar f$ are as in \eqref{barf} and $\mathbf{p}:=(p_{1},p_{2},p_{3 },p_{4},p_{5})$ is  given by 
$$
\mathbf{p}(t):=\sum_{k=0}^N\mathbf{p}_k(t) \qquad \forall t\in[0,T].
$$
In particular, (iii) follows by \cite[Ex. 1.11.26 and Ex. 2.9.11]{CLSW}.
Clearly,  in view of (i) and the very definition of $\mathbf{p}$,  for a.e. $t\in[0,T]$, $p_1$ satisfies \footnote{With $\tilde\partial_{x_j}$ we mean here the projected limiting subdifferential w.r.t. the $j$-th component of the variable $y_1$.}
\[
-\dot p_1(t) \in\sum_{j=0}^N\,{\rm co}\,\tilde\partial_{x_j}\big\{p_1(t+h_j) \cdot \bar f[t+h_j] +p_2(t+h_j)\bar l_0[t+h_j] \big\}\, \chi_{[0,T]}(t+h_j).  
\]
 By (ii) we deduce that $p_2$, $p_3$, $p_4$, and $p_5$ are constants and (iv) implies that $p_3=-p_1(T)$. Considering also (iii), we notice that $p_2=p_5=-p_4=-\lambda$ and $\eta_3=\eta_2$. We point out that $\| p_1\|_{L^\infty} + \lambda\neq 0$, otherwise $\eta_1=-p_1(T)=0$ and $\eta_2=\eta_3=p_4=-\lambda=0$, which contradicts $(\eta_1,\eta_2,\eta_3)\neq 0$. Therefore, $\lambda$ and the paths $p_k:=p_{1_k}$, $k=0,\dots,N$,  satisfy the requirements of Thm. \ref{MP_Th}.
\end{proof}

\section{Proof of Thm. \ref{Boundary_Th}} \label{MP_proof}
The proof consists of several steps. 
First of all,  we show that it is sufficient to prove the theorem for the auxiliary process associated with the given impulsive boundary process.  Then, we construct a sequence of optimization problems having as admissible controls only strict sense controls, and with costs measuring how much a trajectory is distant from the reference one in the $L^2$-norm. Using the Ekeland variational principle, minimizers are constructed for these problems that converge to the initial boundary process. Moreover, by applying the Maximum Principle in  \cite[Thm. 2.1]{VB} to these problems with reference to the above mentioned minimizers, we obtain in the limit a set of multipliers that meet conditions \eqref{adj_k1}--\eqref{maxham_Gg2}.
 
\vsm

{\em Step 1.}   In view of Prop. \ref{Th_equivalenza}, we can associate with the  $(\Psi,\C)$-boundary process $(\bar\mu,\bar\alpha,\bar x)$ the auxiliary process $(\bar\nu,\bar\alpha,\bar\w,\bar x)$  in which 
$\bar\w=\frac{d\bar\mu}{d|\bar\mu|}$ and  $\bar\nu$ is the scalar nonnegative measure defined as in \eqref{nu}, so that $(\bar\nu,\bar\alpha,\bar\w)\in\tilde\U$. In particular,  the trajectory $\bar x$ can be also  interpreted as a solution of the auxiliary control system,
\bel{aux_proof}
\begin{cases}
dx(t)=  f\left(t,\{x(t-h_k)\}, \alpha(t)\right)\,dt +\g(t, \alpha(t),\w(t)) \nu(dt),  \quad   t\in[0,T],  \\[1.2ex]
x(t) = \zeta(t) \ \ \ \text{a.e. }t\in [-h,0[,
\end{cases}
\eeq
where $\g$ is as in \eqref{tilde_g}.   Clearly,  $(\bar\nu,\bar\alpha,\bar\w,\bar x)$ turns out to be a $(\Psi,\C)$-boundary process for the $(\Psi,\C)$-reachable set associated with   control system \eqref{aux_proof}.

Suppose we have proved  Thm. \ref{Boundary_Th} for $(\bar\nu,\bar\alpha,\bar\w,\bar x)$. Then, since the drift term $f$ is the same for the original and the auxiliary control systems, there exist $\eta\in\R^l\setminus\{0\}$ and $p_k\in W^{1,1}([-h_k,T],\R^n)$, $k=0,\dots,N$,  such that
 \eqref{adj_k1} is satisfied for any $k=1,\dots,N$,
and such that conditions \eqref{adj_eqk}, \eqref{trans1}, \eqref{trans2}, \eqref{ham_f}, and \eqref{adj_eq} are met. Moreover, conditions \eqref{maxham_G3} and \eqref{maxham_G4} for the auxiliary control system reduce respectively to
\bel{provv1}
\sup_{a\in A(t), \ w\in\tilde\K}\left\{ \frac{p(t)\cdot G(t,a)w}{1+\sum_{i=1}^m |\sum_{j=1}^m g_{ij}(t,a) w^j|} \right\} \leq 0 \qquad \forall t\in[0,T]
\eeq
and
\bel{provv2}
\frac{p(t)\cdot G(t,\bar\alpha(t)) \bar\w(t)}{1+ \sum_{i=1}^m |\sum_{j=1}^m g_{ij}(t,\bar\alpha(t)) \bar\w^j(t)|}=0 \qquad \text{$\bar\nu$-a.e. $t\in[0,T]$.}
\eeq
Let us observe that for any $w\in\K$ there exist $\tilde w\in\tilde\K$ and $r\ge0$ such that $w=r\tilde w$. Moreover, from \eqref{nu} it follows that for any $B\in\BB$ one has $\bar\nu(B)>0$ provided $\bar\mu(B)>0$. Hence, it is immediate to see that  \eqref{provv1} and  \eqref{provv2} imply  \eqref{maxham_G1} and \eqref{maxham_G2} for the original process $(\bar\mu,\bar\alpha,\bar x)$, respectively. 
Furthermore, in case $\K=\R_{\ge0}$ then $\bar\mu\equiv |\bar\mu|$, so that $\bar\w\equiv 1$ in view of \eqref{nu}.  Accordingly, from conditions  \eqref{provv1} and  \eqref{provv2} we can easily deduce the strengthened relations \eqref{maxham_G3} and \eqref{maxham_G4}, respectively.
 Therefore, Thm. \ref{Boundary_Th} is proved for  $(\bar\mu,\bar\alpha,\bar x)$  as soon as it is proved for the auxiliary process  $(\bar\nu,\bar\alpha,\bar\w,\bar x)$, and this will be our goal from now on.
\vsm

{\em Step 2.} Let $(\xi_i)_i\subset\R^l\setminus\RR_\Psi^\C$ be a sequence such that $\xi_i\to \Psi(\bar x(T))$, which exists as $\Psi(\bar x(T))\in\partial\RR_\Psi^\C$. 
From Lemma \ref{lemma_dense} it follows that  there exist sequences $(\tilde m_i)_i\subset L^1([0,T],\R_{\geq 0})$ and $(\tilde\alpha_i,\tilde\w_i)_i\subset \A\times\W$ such that 
\bel{convergenze}
\tilde m_i(t)dt \weak \bar\nu(dt),\qquad \ell(\{t\in[0,T] \text{ : } (\tilde\alpha_i,\tilde\w_i)(t)\neq(\bar\alpha,\bar\w)(t)\})\to0, 
\eeq
and $\g(t,\tilde\alpha_i(t),\tilde\w_i(t))\tilde m_i(t)dt \weak \g(t,\bar\alpha(t),\bar\w(t))\bar\nu(dt)$. Hence,  by Lemma \ref{lemma_misure}, (ii)-(iii),  possibly passing to a subsequence (we do not relabel here and in the rest of the proof), we have that there exists a subset $\E\subset[0,T]$ with $T\in\E$  and $[0,T]\setminus\E$ at most countable, such that  
$$
\lim_i\int_{[0,t]}\g(t,\tilde\alpha_i(t),\tilde\w_i(t))\tilde m_i(t)\,dt=\int_{[0,t]}\g(t,\bar\alpha(t),\bar\w(t))\bar\nu(dt) \quad \forall t\in\E.
$$
Since the sequence $\big(\g(t,\tilde\alpha_i(t),\tilde\w_i(t))\tilde m_i(t)dt\big)_i$ is uniformly bounded in total variation,  by (H4) and \eqref{convergenze}, and since (H3) implies $|f(t,\{\bar x(t-h_k)\},\tilde\alpha_i(t))|\leq c(t) (1+(N+1)\|\bar x \|_{L^\infty}) \in L^1([0,T],\R_{\geq0})$,
from  Lemma \ref{lemma_convergenza} it now follows that there exists a sequence $(\tilde x_i)_i$ of functions from $[-h,T]$ to $\R^n$ whose restriction to $[0,T]$ is absolutely continuous,  such that, for any $i$, $\tilde x_i$ is a solution to
$$
\begin{cases}
 x(t) = \bar x(0) + \int_0^t f(s,\{ x(s-h_k)\},\tilde\alpha_i(s)) ds \\
\qquad\qquad\qquad\qquad\qquad + \int_{[0,t]} \g(s,\tilde\alpha_i(s),\tilde\w_i(s)) \tilde m_i(s)ds \ \ \   \forall t\in]0,T] \\[1.5ex]
x(t) =\zeta(t) \ \ \text{ a.e. $t\in[-h,0[$}, \ \ \ x(0)=\bar x(0).
\end{cases}
$$
Moreover, $d\tilde x_i\weak d\bar x$, so that $(d\tilde x_i)_i$ is uniformly bounded in total variation and $(\tilde x_i)_i$ is uniformly bounded in $L^\infty$ by \eqref{def_integrale}. 
Furthermore, $\tilde x_i(t)\to \bar x(t)$ for all $t\in\E$.   Hence, by the dominated convergence theorem we deduce that $\tilde x_i\to \bar x$ in $L^2([0,T],\R^n)$, so that $\eps_i\to0$, being $(\eps_i)_i\subset \R_{\geq0}$ the sequence defined by
\[
\eps_i^2:= \int_0^T |\tilde x_i(t)-\bar x(t)|^2 dt + |\tilde x_i(T) - \bar x(T)|^2 + |\xi_i- \Psi(\tilde x_i(T))|.
\]
Possibly passing to a subsequence, we can suppose that $\eps_i\le1$ for all $i$.  Let $(C_i)_i$ be the sequence of $L^1$ functions defined by
\bel{C_i}
C_i(t):= i + \max_{j\leq i} \{\tilde m_j(t) \}. 
\eeq
Clearly, this sequence is  monotone, $\tilde m_i(t)\leq C_i(t)$,  and $C_i(t)\to+\infty$ for all $t\in[0,T]$. 
\vsmm 
For each $i$, consider the following (non-impulsive) optimization problem with time delays,
\begin{equation*} 
{\rm (\tilde P_i)}
\left\{
\begin{array}{l}
\ \ \ \text{Minimize} \,\,\,\,    \int_0^T |x(t)-\bar x(t)|^2 dt + |x(T) - \bar x(T)|^2 + |\xi_i- \Psi(x(T))| \\[1.0ex] 
\text{over controls 
$(m,\alpha,\w)\in L^1([0,T],\R_{\geq0}) \times \M([0,T],\R^q\times\R^m)$} \\[1.2ex]
\text{and arcs $x\in L^\infty([-h,T],\R^n)\cap W^{1,1}([0,T],\R^n)$, satisfying} 
\\[1.0ex]
\dot x(t)= f(t,\{x(t-h_k)\},\alpha(t)) + \g(t,\alpha(t),\w(t)) m(t) \ \ \text{a.e. $t\in[0,T]$} \\[1.0ex]
x(t)=\zeta(t) \quad \text{a.e. }t\in[-h,0[, \quad
x(0)\in\C, \\[1.0ex]
(m(t),\alpha(t),\w(t))\in [0,C_i(t)]\times A(t)\times \tilde \K \ \ \text{a.e. $t\in[0,T]$}.
\end{array}
\right.
\end{equation*}

  For each $i$, let $\Gamma_i$ be the set of elements  $(m,\alpha,\w,x_0)$ for which $x_0\in\C$ and  $(m,\alpha,\w)$ is a measurable control such that $(m(t),\alpha(t),\w(t))\in [0,C_i(t)]\times A(t)\times \tilde \K$ a.e.. Under our hypotheses,  for every  $(m,\alpha,\w,x_0)\in\Gamma_i$ there exists exactly one solution 
$$
x:=x[m,\alpha,\w,x_0]\in L^\infty([-h,T],\R^n)\cap W^{1,1}([0,T],\R^n)
$$ 
to the delayed control system in $(\tilde P_i)$  with initial condition $x(0)=x_0$,  so that problem $ (\tilde P_i)$ can be reformulated as 
\[
\begin{cases}
\text{Minimize} \ \J_i(m,\alpha,\w,x_0):= \int_0^T |x(t)-\bar x(t)|^2 dt + |x(T) - \bar x(T)|^2 + |\xi_i- \Psi(x(T))| \\
\text{over $(m,\alpha,\w,x_0)\in\Gamma_i$.}
\end{cases}
\]
 The set $\Gamma_i$  is a complete metric space, when equipped with the metric  $\d$ defined by
\[
\begin{split}
\d((m,\alpha,\w,x_0),(\tilde m, \tilde\alpha,\tilde\w, \tilde x_0))&:=  \|m-\tilde m\|_{L^1} + |x_0- \tilde x_0| \\
&\qquad\qquad +\ell(\{ t \in [0,T] \text { : } (\alpha,\w)(t)\neq (\tilde\alpha,\tilde\w)(t) \}) .
\end{split}
\]
(see \cite[Lemma 1, p. 202]{Cl}). Moreover, by the continuity of the input-output map associated with the (conventional) delayed control system in  $(\tilde P_i)$,  for each $i$ there exists a  function $\rho_i:\R_{\ge0}\to\R_{\ge0}$ with $\lim_{d\to0^+}\rho_i(d)=0$, such that, for any pair $(m,\alpha,\w,x_0)$, $(\tilde m, \tilde\alpha,\tilde\w, \tilde x_0)\in\Gamma_i$,
$$
\|x[m,\alpha,\w,x_0]-x[\tilde m, \tilde\alpha,\tilde\w, \tilde x_0]\|_{L^\infty}\le\rho_i\big(\d((m,\alpha,\w,x_0),(\tilde m, \tilde\alpha,\tilde\w, \tilde x_0))\big).
$$
As a consequence, the map $\J_i$ is continuous on $\Gamma_i$ with respect to $\d$.

\vsm
{\it Step 3.} By the previous arguments,  $(\tilde m_i,\tilde\alpha_i,\tilde\w_i,\tilde x_i(0))$ is an $\eps_i^2$-minimizer of   problem ${\rm (\tilde P}_i)$,  thus  Ekeland's variational principle yields the existence of a sequence $(m_i,\alpha_i,\w_i,x_{0_i})$ which is optimal for the following  optimization problem  
\[
{\rm (P}_i)
\begin{cases}
\ \ \ \text{Minimize} \ \  \J_i(m,\alpha,\w,x_0) \\
\qquad\qquad\quad\qquad+\eps_i\left( |x_0- x_{0_i}| + \int_0^T [|m(t)-m_i(t)| + \vartheta_i(t,\alpha(t),\w(t))] dt \right) \\[1.5ex]
\text{over } \ (m,\alpha,\w,x_0)\in\Gamma_i,
\end{cases}
\]
where $\vartheta_i:[0,T]\times\R^q\times\R^m\to\{0,1\}$ is defined by
\[
\vartheta_i(t,a,w):=
\begin{cases}
0 \qquad \text{if } (a,w)=(\alpha_i(t),\w_i(t)), \\
1 \qquad \text{otherwise.}
\end{cases}
\]
Moreover, it also holds 
\bel{ek_dist}
\d((m_i,\alpha_i,\w_i,x_{0_i}), (\tilde m_i,\tilde\alpha_i,\tilde \w_i,\tilde x_i(0))) \leq \eps_i.
\eeq
By \eqref{convergenze}, \eqref{ek_dist}, and the fact that $\tilde x_i(0)\equiv\bar x(0)$ we get
\begin{gather}
\ell(\{t\in[0,T] \text{ : } (\alpha_i(t),\w_i(t))\neq (\bar\alpha(t),\bar \w(t))\}) \to 0,\label{alfa_ek} \\
m_i(t)dt \weak \bar\nu(dt), \label{mu_ek} \\
x_{0_i}\to\bar x(0). \label{x0_ek}
\end{gather}
By \eqref{ek_dist},  the fact that   $(\tilde m_i,\tilde\alpha_i,\tilde\w_i,\tilde x_{0_i})$ is an $\eps_i^2$-minimizer for problem ${\rm (\tilde P}_i)$, and the fact that   $(m_i,\alpha_i,\w_i,x_{0_i})$  is optimal for ${\rm (P}_i)$,  we deduce that $\J_i(m_i,\alpha_i,x_{0_i})\to0$, so that the trajectories  $x_i:=x[m_i,\alpha_i,\w_i,x_{0_i}]$ satisfy $x_i\to \bar x$ in $L^2([0,T],\R^n)$ and $x_i(T)\to\bar x(T)$. Hence, possibly up to a subsequence,  we have
\bel{conv_ae}
x_i\to \bar x\ \  \text{ in $L^2$}, \qquad x_i(t)\to \bar x(t) \ \ \text{ a.e. }t\in[0,T], \qquad x_i(T)\to\bar x(T).
\eeq
By (H4), \eqref{mu_ek}, and \eqref{x0_ek} there exists $\tilde C>0$ such that $(x_{0_i})_i\subset \tilde C\B_n$ and the sequence $\big(\g(t,\alpha_i(t),\w_i(t)) m_i(t) dt \big)_i$ is bounded in total variation by $\tilde C$. Accordingly, using (H3) we get
\[
\begin{split}
|x_i(t)| &\leq |x_{0_i}| + \int_{0}^t |f(s,\{x_i(s-h_k)\},\alpha_i(s))| \, ds + \int_{0}^t |\g(s,\alpha_i(s),\w_i(s))| \, |m_i(s)| ds \\
& \leq 2\tilde C +  \int_{0}^t c(s) (1+ |\{x_i(s-h_k)\}|) \, ds,
\end{split}
\]
from which we immediately deduce
\[
\|x_i\|_{L^{\infty}(0,t)}\leq 2\tilde C +  \int_{0}^t c(s) \Big(1+(N+1)\big( \|x_i\|_{L^{\infty}(0,s)} + \|\zeta\|_{L^{\infty}(-h,0)} \big)\Big) \, ds.
\]
A straightforward application of the Gronwall's Lemma implies that $(x_i)_i$ is a uniformly bounded sequence in $L^\infty$, so that there exists $M>0$ such that
\bel{bound_xi}
\|\bar x\|_{L^\infty([-h,T])}\leq M,\qquad\qquad \|x_i\|_{L^\infty([-h,T])}\leq M\ \ \ \text{for any $i$.}
\eeq
As a consequence, we obtain
\[
\begin{split}
\int_{0}^T |\dot x_i(t)| dt &\leq \int_{0}^T |f(t,\{x_i(t-h_k)\},\alpha_i(t))| \, ds + \int_{0}^T |\g(t,\alpha_i(t),\w_i(t))| \, |m_i(t)| dt \\
& \leq \int_{0}^T c(t) \Big(1+ (N+1) \big( M + \|\zeta\|_{L^{\infty}(-h,0)} \big) \Big) \, ds + \tilde C. \\
\end{split}
\]
Therefore, the sequence of measures $(dx_i)_i$ associated with the $x_i$'s is uniformly bounded in total variation and, in view of \eqref{x0_ek}-\eqref{conv_ae}, satisfies
$$
\int_{[0,t]}dx_i(s)\to \int_{[0,t]}d\bar x(s) \quad\forall t\in\tilde\E,
$$
where $\tilde\E\subset[0,T]$ has full Lebesgue measure and contains $T$. 
Hence, Lemma \ref{lemma_misure}, (i), yields that
$dx_i\weak d\bar x$.  Thanks to  \eqref{alfa_ek}, \eqref{conv_ae}, hypothesis (H3) and \eqref{bound_xi}, we can apply the dominated convergence theorem to deduce that
\bel{conv_f}
f(t,\{x_i(t-h_k)\},\alpha_i(t))dt \weak f(t,\{\bar x(t-h_k)\},\bar\alpha(t))dt.
\eeq
Since $\g(t,\alpha_i(t),\w_i(t))m_i(t)dt=dx_i(t)-f(t,\{x_i(t-h_k)\},\alpha_i(t))dt$, we obtain that
\bel{conv_G}
\g(t,\alpha_i(t),\w_i(t))m_i(t)dt \weak \g(t,\bar\alpha(t),\bar\w(t))\bar\nu(dt).
\eeq

\vsm
{\it Step 4.} Problem ${\rm (P}_i)$ is a non-impulsive optimization problem with time delays in the dynamics,  for which  a Maximum Principle is available. In particular, by applying \cite[Thm. 2.1]{VB} with reference to the minimizer $(m_i,\alpha_i,\w_i,x_i)$ we deduce the existence of arcs   $p_{k_i}\in W^{1,1}([-h_k,T],\R^n)$, $k=0,\dots, N$, such that
\bel{def_pki}
\dot p_{k_i}(t)=0 \ \ \text{for a.e. $t\in[-h_k,0]$,} \qquad p_{k_i}(t)=0 \ \ \forall t\in[(T-h_k)\vee0,T],
\eeq
for $k=1,\dots,N$, and satisfying  the following conditions: \footnote{Since problem (P$_i$)  has free terminal point,  the cost multiplier can be taken equal to 1.}
\begin{gather}
\begin{array}{l}
(-\dot p_{0_i}((t-h_0)\vee0),\dots,-\dot p_{N_i}((t-h_N)\vee0)) \\
\qquad\qquad\in {\rm co}\,  \partial_{x_0,\dots,x_N}\big(p_i(t) \cdot  f_i[t]\big) -2 (x_i(t) - \bar x(t)) \quad \text{ a.e. $t\in[0,T]$}; 
\end{array}\label{adj_ek} \\[1.2ex]
p_i(0) \in N_\C(x_i(0)) + \eps_i \B_n; \label{trans1_ek} \\[1.2ex]
-p_i(T) \in \partial_x |\xi_i - \Psi(x_i(T))| +2(x_i(T)-\bar x(T)); \label{trans2_ek}\\[1.2ex]
\begin{array}{l}
p_i(t) \cdot \big(f_i[t] + \g(t,\alpha_i(t),\w_i(t)) m_i(t)\big) \\
 \qquad\ds=\max_{ a\in A(t), \ w\in\tilde\K, \ m\in[0,C_i(t)]}\big\{ p_i(t) \cdot \big[ f(t,\{x_i(t-h_k)\}, a) +\g(t,a,w)m \big]
 \\
\ds\quad\qquad\qquad\qquad
- \eps_i(\vartheta_i(t,a,w) + |m-m_i(t)|)\big\} \quad \text{a.e. $t\in[0,T]$,}
\end{array} \label{maxham_ek}
\end{gather}
where, for any $t\in[0,T]$, $p_i(t)$ and $f_i[t]$ are given by
\bel{fpi}
p_i(t):=\sum_{k=0}^N p_{k_i}(t), \qquad\qquad f_i[t]:= f(t,\{ x_i(t-h_k)\},\alpha_i(t)).
\eeq
We now deduce conditions \eqref{adj_eqk}--\eqref{maxham_G2} by passing to the limit in relations \eqref{adj_ek}--\eqref{maxham_ek}. 
To begin with, observe that, from \eqref{adj_ek} and the very definition of $p_i$ in \eqref{fpi}, using  \eqref{bound_xi} together with (H2), we get
\[
\begin{array}{l}
|p_i(t)|  \leq |p_i(T)| + \int_{[t,T]} \sum_{k=0}^N |\dot p_{k_i}(s)| \,ds \\
\ \ \quad\quad\leq |p_i(T)|+ \sum_{k=0}^N \int_{[t,T]}   |p_i(s+h_k)| L_M(s+h_k) \chi_{[0,T]}(s+h_k) \, ds \\
\qquad\qquad\qquad\qquad\qquad\qquad\qquad\qquad\qquad\qquad\qquad+ 2\int_{[0,T]} |x_i(s) - \bar x(s)|\, ds.
\end{array}
\]
Since $(m_i,\alpha_i,\w_i,x_i)$ is a strict sense process and $\xi_i\in\R^l\setminus\RR_\Psi^\C$ then $|\xi_i- \Psi(x_i(T))|\neq0$, hence \eqref{trans2_ek} and the Jacobian chain rule imply that there exists $\eta_i\in\partial\B_l$ such that
\bel{trans_i}
-p_i(T)\in\eta_i\cdot\partial\Psi(x_i(T)) + 2 (x_i(T)-\bar x(T)).
\eeq
Thus, by \eqref{conv_ae} and the fact that $\Psi$ is Lipschitz continuous in a neighborhood of $\bar x(T)$ it follows that $(p_i(T))_i\subset\R^n$ and  $\big(\int_{[0,T]} |x_i(s) - \bar x(s)|\, ds\big)_i\subset \R$ are uniformly bounded sequences. 
Thereby, a standard application of the Gronwall's Lemma to the map $t\mapsto \|p_i\|_{L^\infty([t,T])}$ allows us to deduce that $(p_i)_i\subset W^{1,1}([0,T])$ is a uniformly bounded sequence in $L^{\infty}$, namely, there exists $P>0$ such that
\bel{bound_p}
\|p_i\|_{L^\infty} \leq  P     \qquad \forall i\in\N.
\eeq 
Accordingly, $(p_{k_i})_i$ is a sequence with uniformly integrably bounded derivatives for any $k=0,\dots,N$,  such that $p_{k_i}(T)=0$ for $k=1,\dots,N$ in view of \eqref{def_pki}, and  the sequence $(p_{0_i}(T))_i\equiv (p_i(T))_i$ is uniformly bounded by the previous arguments. In view of the Ascoli-Arzel\'a's Theorem there exist functions $\tilde p_k\in W^{1,1}([-h_k,T],\R^n)$, $k=0,\dots,N$, and a subsequence of $(p_{k_i})_i$ such that
$
p_{k_i}\to \tilde p_k$ in $L^\infty$. 
As a consequence, for $p_i$ as in \eqref{fpi}, one has
\bel{tildepi}
p_i \to \tilde p := \sum_{k=0}^N \tilde p_k \quad \text{in $L^\infty$}. 
\eeq
Furthermore, the upper semicontinuity of the Clarke generalized Jacobian 
and \eqref{conv_ae} yield the existence of a sequence $(\tilde r_i)_i$ of measurable functions from $[0,T]$ into $\R_{\geq0}$, such that $\tilde r_i(t)\to0$ for a.e. $t\in[0,T]$, and for which we have
\[
D_{x_0,\dots,x_N} f(t, \{x_i(t-h_k)\}, \bar\alpha(t)) \subset D_{x_0,\dots,x_N} \bar f[t] + \tilde r_i(t) \B,
\]
where $\bar f[\cdot]$ is as in \eqref{barf}. Observe that, in view of (H2), the sequence $(\tilde r_i)_i$ is uniformly integrably bounded. In particular, we have that $\tilde r_i(t)\le 2L_M(t)$ a.e, where $M$ is as in \eqref{bound_xi}.
Therefore, from \eqref{adj_ek} and \eqref{bound_p},  setting
\[
 \Omega_i:=\{t\in[0,T] \text{ : } \alpha_i(t)=\bar\alpha(t) \},  
 \]
 for any $t\in\Omega_i$,  we get \footnote{We recall that 
$
p\cdot DG (x)= {\rm co}\,\partial(p\cdot G)(x)$ for any $p\in\R^l$ and for all $x\in\R^k$, 
see \cite[Def. 6.2.2]{OptV}.}
\[
\begin{split}
&\big(-\dot p_{0_i}((t-h_0)\vee0),\dots, -\dot p_{N_i}((t-h_N)\vee0) \big)\\
&\quad\quad\in p_i(t) \cdot D_{x_0,\dots,x_N} \bar f[t] + \Big(|p_i(t)| \tilde r_i(t) +2 |x_i(t) - \bar x(t)|\Big) \B \\
&\quad\quad \subset {\rm co}\, \partial_{x_0,\dots,x_N} \big(\tilde p(t)\cdot  \bar f[t]\big) + \Big( |p_i(t) - \tilde p(t)| L_M(t) + P \tilde r_i(t) +2 |x_i(t) - \bar x(t)| \Big) \B.
\end{split}
\]
In particular, we have shown that 
\[
\big(-\dot p_{0_i}((t-h_0)\vee0),\dots, -\dot p_{N_i}((t-h_N)\vee0)) \big)\in \,{\rm co}\, \partial_{x_0, \dots,x_N} \Big( \tilde p(t) \cdot  \bar f[t]\Big) + r_i(t)  
\] 
for a.e. $t\in \Omega_i$,
where $r_i:[0,T]\to\R_{\geq0}$ is given by
$$
r_i(t):=P\, \tilde r_i(t) +2  |x_i(t) - \bar x(t)| + \|p_i - \tilde p \|_{L^{\infty}} L_M(t).
$$
By \eqref{conv_ae},  \eqref{tildepi},  and the  boundedness property of  $\tilde r_i$, it follows that  the sequence $r_i$ converges to 0  a.e. on $[0,T]$ and is uniformly integrably bounded, hence
  the dominated convergence theorem  implies  $r_i\to 0$ in $L^1$.
Therefore, from the compactness of trajectories theorem 
(see \cite[Thm. 2.5.3]{OptV}) it follows that there exist functions $p_k\in W^{1,1}([-h_k,T],\R^n)$, $k=0,\dots,N$, and a subsequence of $(p_{k_i})_i$ such that
\bel{conv_pi}
p_{k_i}\to p_k \ \ \text{in $L^\infty$,}
\eeq
and  
\[
\big(-\dot p_{0}((t-h_0)\vee0),\dots, -\dot p_{N}((t-h_N)\vee0) \big)\in {\rm co} \, \partial_{x_0, \dots,x_N} \Big( \tilde p(t) \cdot  \bar f[t]\Big) \quad \text{a.e. $t\in[0,T]$}. 
\]
By the uniqueness of the uniform limit we deduce $p_k \equiv \tilde p_k$ for every $k=0,\dots,N$, so that $p\equiv \tilde p$ and the adjoint equation \eqref{adj_eqk} is confirmed. Moreover, in view of \eqref{def_pki}, also \eqref{adj_k1} holds for $k=1,\dots,N$. 

Passing to the limit of a proper subsequence in \eqref{trans_i}, using \eqref{conv_ae},  the upper semicontinuity of the Clarke's generalized Jacobian,  and the fact that $(\eta_i)_i$ is a bounded sequence, we deduce that there exists $\eta\in\partial\B_l$ (hence, $\eta\neq0$) for which the transversality condition \eqref{trans2} at the final point holds. Furthermore, from \eqref{trans1_ek}, the properties of the limiting normal cone, and \eqref{x0_ek} we easily deduce the  transversality condition  \eqref{trans1} at the initial point.

\vsm

Now we prove the  maximality conditions  \eqref{ham_f} and \eqref{maxham_G3}, where the last one,  for the auxiliary optimization problem,  reduces to
\bel{maxham_G3_aux}
\sup_{a\in A(t), \, w\in\tilde\K} \{p(t) \cdot \g(t,a,w)\} \leq 0 \qquad \forall t\in[0,T].
\eeq
In the following, we will use the fact that, given a sequence $(\NN_i)_i$ of subsets of $[0,T]$ with $\ell(\NN_i)\to0$, then there is a subsequence (we do not relabel) such that \footnote{Indeed, it is enough to consider a subsequence such that $\sum_i \ell(\NN_i) < \infty$. Then,    $\ell(\{t\in[0,T]\setminus\NN_j  \ \text{ for $j\geq i$}\}) \geq \ell(\{t\in[0,T]\setminus \cup_{j\geq i} \NN_j \}) \geq T - \sum_{j\geq i} \ell(\NN_j) \to T$ as $i\to\infty$.} 
\[
\lim_i \ \ell(\{t \text{ : } t\in[0,T]\setminus\NN_j  \ \text{ for any $j\geq i$}\}) = T. 
\]
By \eqref{mu_ek} we have that $(m_i)_i$ is uniformly bounded in $L^1$, hence 
\[
\ell(\{t\in[0,T] \text{ : } m_i(t) > 1/\sqrt{\eps_i} \})\to 0 \qquad \text{as $i\to\infty$.}
\]
Moreover, using \eqref{bound_p}, \eqref{bound_f}, and \eqref{bound_xi}, we get that the sequence   $(\varphi_i)_i$ given by 
\[
\varphi_i(t):=\sup_{a\in A(t)} |p_i(t) \cdot f(t, \{x_i(t-h_k)\}, a)|, \quad t\in[0,T],
\]
is uniformly pointwise bounded by $P c(t) \big(1+ (N+1)(M+\|\zeta\|_{L^\infty(-h,0)})\big)$, so that 
\[
\ell(\{t\in[0,T] \text{ : } \varphi_i(t) > \sqrt{C_i(t)} \})\to 0 \qquad \text{as $i\to\infty$,}
\]
since $C_i(t)\to +\infty$ as $i\to\infty$ for any $t\in[0,T]$. Furthermore, since \eqref{maxham_ek} implies that $m_i(t)=C_i(t)$ as soon as $p_i(t)\cdot \g(t,\alpha_i(t),\w_i(t))>\eps_i$, again from \eqref{mu_ek} it holds
\[
\ell(\{t\in[0,T] \text{ : } p_i(t)\cdot \g(t,\alpha_i(t),\w_i(t))>\eps_i \})\to 0 \qquad\text{as $i\to\infty$.}
\]
Let $\S_i$ bet is the subset of $[0,T]$ of points that satisfy the following conditions: 
\begin{gather}
m_j(t) \leq \frac{1}{\sqrt{\eps_i}} \qquad \forall j\geq i \label{bound_mi} \\[1.2ex]
\sup_{a\in A(t)} |p_j(t)\cdot f(t,\{x_j(t-h_k)\},a)| \leq \sqrt{C_j(t)}\qquad \forall j\geq i \label{bound_pfi}\\[1.2ex]
p_j(t)\cdot \g(t,\alpha_j(t),\w_j(t)) \leq \eps_j \qquad \forall j\geq i \label{bound_pGi} \\[1.2ex]
\alpha_j(t) =\bar\alpha(t) \qquad \forall j\geq i \label{alfa_i} \\[1.2ex]
\begin{array}{l}
\ds \max_{\substack{a\in A(t), \, w\in\tilde\K \\ m\in[0,C_j(t)]}}\big\{ p_j(t) \cdot \big(f(t,\{x_j(t-h_k)\}, a) + \g(t,a,w)m \big) - \eps_j(\vartheta_j(t,a,w) \\
\qquad+ |m-m_j(t)|)\big\} =p_j(t) \cdot\big( f_j[t] + \g(t,\alpha_j(t),\w_j(t))m_j(t) \big) \quad \forall j\geq i
\end{array} \label{ham_i}  \\[1.2ex]
x_j(t-h_k) \to \bar x(t-h_k) \qquad \forall k=0,\dots,N, \ \ \text{as }j\to\infty. \label{ae_xi}
\end{gather}
By the previous arguments, \eqref{alfa_ek}, \eqref{conv_ae}, and \eqref{maxham_ek}, it turns out that, up to a subsequence, $\ell(\S_i)\to T$. Clearly, $\S:=\cup_i \S_i$ is a dense subset of $[0,T]$.

  Given $t\in\S$,   let $i\in\N$  be such that $t\in\S_i$. Fix arbitrary   $\bar w\in\tilde\K$ and $\bar a\in A(t)$.  Using \eqref{bound_mi}, \eqref{bound_pGi}, \eqref{alfa_i}  and taking $a=\bar a$ and $m=0$, from \eqref{ham_i} we obtain
\[ 
p_j(t) \cdot f(t,\{x_j(t-h_k)\},\bar\alpha(t)) + \sqrt{\eps_j} \geq p_j(t) \cdot f(t,\{x_j(t-h_k)\},\bar a) -\eps_j \Big(1+ \frac{1}{\sqrt{\eps_j}}\Big) 
\]
for any $j\geq i$. 
By \eqref{conv_pi} and \eqref{ae_xi} we can let $j\to\infty$ in the above relation. Therefore, we deduce that the drift-maximality condition \eqref{ham_f} holds in the full measure subset $\S$.

\noindent Now, for any $j\geq i$,  choose $\bar w_j\in\tilde\K$,  $\bar a_j \in A(t)$ satisfying
\bel{supremum}
p_j(t) \cdot \g(t,\bar a_j,\bar w_j) C_j(t) \geq \sup_{a \in A(t), \, w\in\tilde\K} \{p_j(t) \cdot \g(t,a,w) C_j(t) \} - \eps_j.
\eeq
By \eqref{bound_mi}--\eqref{bound_pGi}, \eqref{supremum}, and taking $a=\bar a_j$ and $m=C_j(t)$, from \eqref{ham_i} we obtain
\[
\sqrt{C_j(t)} + \sqrt{\eps_j} \geq -\sqrt{C_j(t)} + \sup_{a\in A(t), \ w\in\tilde\K}\{p_j(t) \g(t,a,w)  \} C_j(t)-\eps_j -\eps_j-\eps_j C_j(t)
\]
for any $j\geq i$. 
Since $C_j(t)\to \infty$ as $j\to\infty$, if we divide the above inequality for $C_j(t)$ and we let $j\to\infty$, by the continuity of $p\mapsto \sup_{a\in A(t), \ w\in\tilde\K} \{ p \cdot \g(t,a,w)\}$ and \eqref{conv_pi} we get that 
\[
\sup_{a\in A(t), \ w\in\tilde\K} \{ p(t) \cdot \g(t,a,w)\} \leq 0 \qquad \forall t\in\S.
\] 
To obtain the maximality condition \eqref{maxham_G3_aux} on the whole interval $[0,T]$,  it suffices to notice that $\S$ is a dense set in $[0,T]$,  the function $t \mapsto \sup_{a\in A(t), \ w\in\tilde\K} \{ p(t) \cdot \g(t,a,w)\}$ is continuous thanks to (H4),  and  that $p(\cdot)$ is an absolutely continuous function.

\vsm

In order to conclude the proof, it remains to prove \eqref{maxham_G4}, that for the auxiliary control system reduces to
\bel{maxham_G4_aux}
p(t) \cdot \g(t,\bar\alpha(t),\bar\w(t)) =0 \qquad \text{$\bar\nu$-a.e. $t\in[0,T]$.}
\eeq
Taking $a=\alpha_i(t)$, $w=\w_i(t)$, and $m=0$,  from \eqref{maxham_ek} we obtain
$$
p_i(t)\cdot \g(t,\alpha_i(t),\w_i(t)) m_i(t) \geq -\eps_i m_i(t) \qquad \text{a.e. $t\in[0,T]$},
$$
from which it follows that
\bel{pi_Gi}
p_i(t) \cdot \g(t,\alpha_i(t),\w_i(t)) \geq -\eps_i \qquad \text{ for a.e. $t\in\{\tau\in[0,T] \text{ : } m_i(\tau)>0\}$.}
\eeq
Moreover, by \eqref{conv_G} and \eqref{conv_pi} we deduce that 
\[
p_i(t)\cdot \g(t,\alpha_i(t),\w_i(t)) m_i(t)dt \weak p(t) \cdot \g(t,\bar\alpha(t),\bar\w(t)) \bar\nu(dt).
\] 
Therefore, Lemma \ref{lemma_misure}, (iii), implies that, restricting attention to a suitable subsequence, there exists a subset $\hat\E\subset[0,T]$ of full measure, containing $T$,  such that
\bel{ultima_conv}
\lim_i \int_{[0,t]}p_i(s)\cdot \g(s,\alpha_i(s),\w_i(s)) m_i(s)ds = \int_{[0,t]}p(s) \cdot \g(s,\bar\alpha(s),\bar w(s)) \bar\nu(ds) \quad \forall t\in\hat\E.
\eeq
By \eqref{pi_Gi} and the fact that $m_i$ takes values in $\R_{\geq0}$, for any $t\in\hat\E$ we have
\[
\begin{split}
&\int_{[0,t]}p_i(s)\cdot \g(s,\alpha_i(s),\w_i(s)) m_i(s)ds \\ 
&\quad\qquad= \int_{[0,t]\cap \{\tau \text{ : }m_i(\tau)>0\}}p_i(s)\cdot \g(s,\alpha_i(s),\w_i(s)) m_i(s)ds \geq -\eps_i\int_{[0,t]} m_i(s) ds.
\end{split}
\]
Recalling that $(m_i)_i$ is uniformly bounded in $L^1$ by \eqref{mu_ek}, from the above relation and \eqref{ultima_conv} we deduce $\int_{[0,t]} p(s) \cdot \g(s,\bar\alpha(s),\bar\w(s)) \bar\nu(ds) \geq 0$ for any $t\in\hat\E$. But this implies 
$$
\int_{B} p(t) \cdot \g(t,\bar\alpha(t),\bar\w(t)) \bar\nu(dt) \geq 0 \qquad  \forall B\in\BB, 
$$
as the family of subsets $([0,t])_{t\in\hat\E}$ generates $\BB$. From this last relation it follows that 
$$
p(t)\cdot \g(t,\bar\alpha(t),\bar\w(t))\geq 0\qquad\text{$\bar\nu$-a.e. $t\in[0,T]$.} 
$$
The previous relation, condition \eqref{maxham_G3_aux} and the fact that $\bar\alpha(t)\in A(t)$ $\bar\nu$-a.e. $t\in[0,T]$ and $\bar\w(t)\in\tilde\K$  $\bar\nu$-a.e. $t\in[0,T]$, imply \eqref{maxham_G4_aux}. 
\qed

%
%
%
%

\section*{Appendix}

\begin{proof}[Proof of Lemma  \ref{lemma_convergenza}]
For any integer $i\geq0$ consider the measure $\tilde\gamma_i$ on the Borel subsets of $[-h,T]$ defined by $\tilde\gamma_i(B)=0$ for any Borel subset $B\subseteq[-h,0]$ and $\tilde\gamma_i\equiv\gamma_i$ on the Borel subsets of $]0,T]$. Hence, consider the function $\tilde z_0:[-h,T]\to\R^n$, absolutely continuous on $[0,T]$,  defined by $\tilde z_0(t):=z_0(t) -\int_{[0,t]}\tilde\gamma_0(ds)$ for any $t\in[-h,T]$, that turns out to satisfy the following   delayed differential equation
\[
\begin{cases}
\dot{\tilde z}_0(t) = \varphi_0\left( t, \left\{ \tilde z_0(t-h_k) +\int_{[0,t-h_k]} \tilde \gamma_0(ds) \right\} \right), \quad  t\in]0,T] \\
\tilde z_0(t) = \eta(t) \quad \text{a.e. } t\in [-h,0[, \qquad \tilde z_0(0)=\xi_0.
\end{cases}
\]
Hence, let $M>0$ be such that \footnote{Here we use the fact that $\eta(\cdot)$ is bounded.}
\[
\|\tilde z_0(t)\|_{L^\infty([-h,T])} + \| \gamma_i\|_{C^*(\R^k)} \leq \frac{M}{2(N+1)} 
\]
for any integer $i\geq 0$. Let $\tilde L_M\in L^1([0,T],\R_{\geq0})$ be as in \eqref{lipschitz_phi} and notice that \eqref{lipschitz_phi} and \eqref{bound_phi} are valid for $i=0$ as well, due to \eqref{conv_phi}. Now, for any integer $i\geq1$, consider the function $\beta_i:[0,T]\to\R_{\geq0}$ defined as
\[
\beta_i(t) := \left| \dot{\tilde z}_0(t) - \varphi_i\left( t, \left\{ \tilde z_0(t-h_k)  + \int_{[0,t-h_k]} \tilde\gamma_i(ds) \right\} \right) \right|
\]
and the set $\E_i$ given by 
\[
\E_i := \{ t \in[0,T] \text{ : } \varphi_i(t,\{x_k\}) = \varphi_0(t, \{z_k\}) \ \forall \{z_k\}\in(\R^n)^{N+1} \}.
\] 
Thus, for a.e. $t\in\E_i$,  we get
\[
\begin{array}{l}
\beta_i(t) = \Big| \varphi_0\big( t, \big\{ \tilde z_0(t-h_k) +\int_{[0,t-h_k]} \tilde \gamma_0(ds) \big\} \big) \\[1.2ex]
\quad - \varphi_0\big( t, \big\{ \tilde z_0(t-h_k) + \int_{[0,t-h_k]} \tilde\gamma_i(ds) \big\} \big) \Big| \leq \tilde L_M(t)\big| \big\{\int_{[0,t-h_k]}\big[\gamma_i(ds) -\gamma_0(ds) \big] \big\}\big|.
\end{array}
\]
Instead, for a.e. $t\in[0,T]\setminus\E_i$ we have
\[
\begin{array}{l}
\beta_i(t) \leq  |\dot{\tilde z}_0(t)| + \big| \varphi_i\big( t, \big\{ \tilde z_0(t-h_k) + \int_{[0,t-h_k]} \tilde\gamma_0(ds) \big\} \big)\big| \\[1.2ex]
\quad+\big| \varphi_i\big( t, \big\{ \tilde z_0(t-h_k) + \int_{[0,t-h_k]} \tilde\gamma_i(ds) \big\} \big) - \varphi_i\big( t, \big\{ \tilde z_0(t-h_k) + \int_{[0,t-h_k]} \tilde\gamma_0(ds) \big\} \big) \big| \\[1.2ex]
\ \qquad\leq 2\tilde c(t) + \tilde L_M(t) \big|\big\{ \int_{[0,t-h_k]} \big[\gamma_i(ds) -\gamma_0(ds) \big] \big\}\big|.
\end{array}
\]
Since  $\left| \int_{[0,t]} \left[\gamma_i(ds) -\gamma_0(ds) \right] \right|$ is uniformly bounded in $[0,T]$ and  converges to 0 for   all $t$ in the subset $\E\subseteq[0,T]$ of full measure and containing $t=T$,
 by applying the dominated convergence theorem and by \eqref{conv_phi} we obtain
\bel{conv_beta}
\lim_i \int_0^T \beta_i(t)\,dt =0.
\eeq
Thanks to \eqref{conv_beta} we can apply the Filippov Theorem for delayed systems \cite[Thm. 4.1]{VB} in order to deduce that for any integer $i$ sufficiently large there exists a function $\tilde z_i\in W^{1,1}([0,T],\R^n)$ that solves \footnote{
The function $(t,\{z_k\})\mapsto \tilde\varphi_i(t,\{z_k\}):= \varphi_i\big(t, \big\{ z_k + \int_{[0,t-h_k]} \tilde\gamma_i(ds) \big\} \big)$ is measurable in the $t$ variable. Moreover, $\{z_k\}\in \{\tilde z_0(t-h_k)\} + \frac{M}{2} \B_{n(N+1)}$ implies that $\big\{ z_k + \int_{[0,t-h_k]} \tilde\gamma_i(ds) \big\}\in M \B_{n(N+1)}$, so that $\tilde\varphi_i$ is $\tilde L_M(\cdot)$-Lipschitz continuous in the $\{z_k\}$ variables in the $\frac{M}{2}$-tube around $\{\tilde z_0(t-h_k)\}$.
}
\bel{equazione_zi}
\begin{cases}
\dot{\tilde z}_i(t) = \varphi_i\left( t, \left\{ \tilde z_i(t-h_k) + \int_{[0,t-h_k]}\tilde\gamma_i(ds) \right\}\right),  \qquad  t\in]0,T] \\
\tilde z_i(t) =\eta(t) \quad \text{a.e. }   t\in[-h,0[, \qquad \tilde z_i(0)= \xi_i.
\end{cases}
\eeq
and that satisfies
\bel{conv_zeta} 
\begin{split}
\| \tilde z_i - \tilde z_0 \|_{L^\infty([0,T])} &\leq |\xi_i-\xi_0| +\int_0^T |\dot{\tilde z}_i(t) - \dot{\tilde z}_0(t)|\,dt \\
& \leq e^{(N+1)\int_0^T \tilde L_M(t)dt} \left( |\xi_i-\xi_0| + \int_0^T \beta_i(t)dt \right).
\end{split}
\eeq
For any integer $i\geq 1$ we define the function $z_i:[-h,T]\to\R^n$ to be $z_i(t)=\tilde z_i(t)+\int_{[0,t]} \gamma_i(ds)$ for any $t\in]0,T]$, $z_i(0)=\xi_i$, and $z_i(t)=\eta(t)$ for $t\in[-h,0[$. In view of \eqref{equazione_zi} we deduce that $z_i$ belongs to $BV([-h,T],\R^n)$ and is a solution to \eqref{eq_z}. Furthermore, by \eqref{conv_zeta}, the fact that $\xi_i\to\xi_0$, and the hypothesis on the convergence of the $\gamma_i$, we immediately deduce that $dz_i\weak dz_0$ and $z_i(t)-\int_{[0,t]} \gamma_i(ds) \to z_0(t) - \int_{[0,t]} \gamma_0(ds)$ uniformly in $[0,T]$, so that $z_i(t)\to z(t)$ for all $t\in\E$.
\end{proof}

\begin{proof}[Proof of Lemma  \ref{lemma_misure}, (ii), (iii)] Let $(\gamma_i,|\gamma_i|)\weak(\gamma,\lambda)$ for some $\lambda\in C^\oplus$  and let $\Psi\in C([0,T],\R^{n\times k})$ be as in (ii).  Let $\tilde\E\subseteq[0,T]$ be the subset of points $t$ such that $\lambda(\{t\})=0$. Clearly, $[0,T]\setminus\tilde\E$ is a countable set.  For every $t\in\E:= \tilde\E\cup\{T\}$ and any integer $i\ge1$, consider  
$$
\int_{[0,t]}\Psi(s)\cdot\gamma_i(ds)=\left(\int_{[0,t]}\Psi_1(s)\cdot\gamma_i(ds),\dots,\int_{[0,t]}\Psi_n(s)\cdot\gamma_i(ds)\right),
$$
where, for every $l=1,\dots,n$,   $\Psi_l(s)$ is the row vector $(\Psi_{l1}(s),\dots, \Psi_{lk}(s))$.  Since this integral  converges  if and only if each component converges, it is sufficient  to prove that  
$$ 
\int_{[0,t]}\Psi_l(s)\cdot\gamma_i(ds)\to \int_{[0,t]}\Psi_l(s)\cdot\gamma(ds) \quad \text{as $i\to\infty$}
$$
for each $l=1,\dots,n$. This is equivalent to show that \footnote{Actually, according to our definition \eqref{def_integrals},  for each $i$, $\int_{[0,t]}\Psi_l(s)\cdot\gamma_i(ds)=\int_{[0,t]}\Psi_l(s)\cdot\w_i(s)|\gamma_i|(ds)=\int_{[0,t]}\sum_{j=1}^k\Psi_{lj}(s)\w_i^j(s)|\gamma_i|(ds)$, where $\w_i$ is the Radon-Nicodym derivative of the measure $\gamma_i$ w.r.t. $|\gamma_i|$. Since, however, $[0,T]\ni t\mapsto \Psi_{l}(t)$ is continuous (and bounded), all components  $\Psi_{lj}(s)$ are $\gamma_i^j$ integrable and  $\int_{[0,t]}\sum_{j=1}^k\Psi_{lj}(s)\w_i^j(s)|\gamma_i|(ds)=\sum_{j=1}^k\int_{[0,t]}\Psi_{lj}(s)\gamma^j_i(ds)$. }
$$
\sum_{j=1}^k\int_{[0,T]}\Psi_{lj}(s)\chi_{_{[0,t]}}(s)\gamma^j_i(ds)\to \sum_{j=1}^k\int_{[0,T]}\Psi_{lj}(s)\chi_{_{[0,t]}}(s)\gamma^j(ds) \quad \text{as $i\to\infty$},
$$
which is certainly true if, for every $j=1,\dots,k$, we have
\bel{lemma3_th}
\int_{[0,T]}\Psi_{lj}(s)\chi_{_{[0,t]}}(s)\gamma^j_i(ds)\to \int_{[0,T]}\Psi_{lj}(s)\chi_{_{[0,t]}}(s)\gamma^j(ds) \quad \text{as $i\to\infty$}.
\eeq
Note that, for $t=T$, the thesis follows from the definition of weak convergence, in view of the continuity of $\Psi$. Thus, let $t\in\tilde\E$. At this point, to complete the proof of (ii)  we can appeal to  \cite[Prop. 1.62]{AFP},     observing that, for each pair $l$, $j$,  the scalar function $ s\mapsto \Psi_{lj}(s)\chi_{_{[0,t]}}(s)$ is a bounded, Borel measurable function such that its unique discontinuity point, $t$, has    $\lambda(\{t\})=0$.  

  In order to prove statement (iii), notice that, since $\gamma_i\weak\gamma$, if for each $j=1,\dots,k$ we consider the Jordan decomposition $\gamma^j_i=(\gamma^j_i)^+-(\gamma^j_i)^-$,  by uniform boundedness,  possibly extracting a subsequence, we obtain that  $(\gamma^j_{i_l})^+\weak\lambda^{j+}$ and $(\gamma^j_{i_l})^-\weak\lambda^{j-}$,  for some positive, finite  measures $\lambda^{j+}$, $\lambda^{j-}$. Then, 
$$
|\gamma^j_{i_l}|=(\gamma^j_{i_l})^++(\gamma^j_{i_l})^-\weak \lambda^j:=\lambda^{j+}+\lambda^{j-},
$$
which implies that $|\gamma_{i_l}|=\sum_{j=1}^k|\gamma^j_{i_l}|\weak \lambda:=\sum_{j=1}^k\lambda^j\in C^\oplus$, 
so that, along this subsequence, we have 
$(\gamma_{i_l},|\gamma_{i_l}|)\weak(\gamma,\lambda)$.
%
\end{proof}
%

\section*{Acknowledgments} 
We would like to thank Prof. Richard Vinter for suggesting the problem and for helpful discussions.

\end{document}